\documentclass[11pt]{article}

\usepackage{amssymb}
\usepackage{amsmath}
\usepackage{amsthm}
\usepackage{amsopn}
\usepackage{mathrsfs}
\usepackage{a4}
\usepackage[english]{babel}
\usepackage{harvard}
\usepackage{comment}
\usepackage{graphics}
\usepackage{epsfig}

\input{mymath.sty}

\sloppy

\parskip         10pt
\parindent        0pt

\renewcommand{\cal}{\mathscr}     

\providecommand{\MR}{$\spadesuit$}

\begin{document}

\title{Pointwise adaptive estimation\\ for robust and quantile regression\footnote{This research has been partially supported by
a grant {\it Bonus Qualit\'e Recherche 2007} from Universit\'e Paris
Descartes.}}
\author{Markus Rei{\ss}\footnote{Institut f\"ur Mathematik, Humboldt-Universit\"at, Unter den Linden 6, D-10099 Berlin, Germany, {\tt
mreiss@mathematik.hu-berlin.de} (corresponding author)}\and Yves Rozenholc\footnote{Universit\'e Paris
Ren\'e Descartes, {\tt yves.rozenholc@univ-paris5.fr}} \and  Charles A.
Cuenod\footnote{George Pompidou Hospital Paris, {\tt
charles-andre.cuenod@egp.aphp.fr }}}
\date{This Version: \today}
\maketitle

\begin{abstract}
 A nonparametric procedure for robust regression estimation and for quantile regression is proposed which is completely data-driven and adapts locally to the regularity of the regression function. This is achieved by considering in each point M-estimators over different local neighbourhoods and by a local model selection procedure based on sequential testing. Non-asymptotic risk bounds are obtained, which yield rate-optimality for large sample asymptotics under weak conditions. Simulations for different univariate median regression models show good finite sample properties, also in comparison to traditional methods. The approach is extended to image denoising and applied to CT scans in cancer research.
\end{abstract}

\vspace*{3cm}

\footnoterule \noindent {\sl 2000 Mathematics Subject
Classification}. Primary 62G08;
secondary 62G20, 62G35, 62F05, 62P10.\\
{\sl Keywords and Phrases}. M-estimation, median regression, robust estimation, local model selection, unsupervised learning, local neighbourhood, median filter, Lepski procedure, minimax rate, image denoising, edge detection.\\

\pagebreak

\section{Introduction}

We consider a generalized regression model
\[ Y_i=g(x_i)+\eps_i,\quad i=1,\ldots,n,\]
with $(\eps_i)$ i.i.d., $x_1,\ldots,x_n$ in the design space ${\cal X}$ and
$g:{\cal X}\to\R$. The problems we have in view are those of robust nonparametric estimation of $g$ in the presence of heavy-tailed noise $(\eps_i)$ and of nonparametric quantile estimation, which is becoming more and more popular in applications. One main application will be robust image denoising. In the spirit of classical M-estimation \cite{Huber} we therefore consider $g(x_i)$ as the location parameter in the observation $Y_i$, that is
\begin{equation}\label{eqfdef}
 g(x_i)=\argmin_{m\in\R}\E[\rho(Y_i-m)]
\end{equation}
for some convex function $\rho:\R\to\R^+$ with $\rho(0)=0$. We shall assume that $g(x_i)$ is uniquely defined by \eqref{eqfdef}, which is true in all cases of interest. If the $Y_i$ have Lebesgue densities, then often an equivalent description is given by the first order condition $\E[\rho'(\eps_i)]=0$ where $\rho'$ denotes the (weak) derivative. Standard examples are $\rho(x)=x^2/2$ for the classical mean regression model ($\E[\eps_i]=0$), $\rho(x)=|x|$ for the median regression model ($\PP(\eps_i\le 0)=\PP(\eps_i\ge 0)=1/2$) and the intermediate case $\rho(x)=x^2/2$ for $\abs{x}\le k$ and $\rho(x)=k\abs{x}-k^2/2$ for $\abs{x}\ge k$ with some $k>0$ for the Huber estimator ($\E[\min(\max(\eps_i,-k),k)]=0$). The quantile regression model is obtained for $\rho(x)=\abs{x}+(2\alpha-1) x$ ($\PP(\eps_i\le 0)=\alpha$ with quantile $\alpha\in (0,1)$), see e.g. \cit{Koenker}. Since we shall care about robustness, we merely assume a mild moment condition $\eps_i\in L^r$ for some $r\ge 1$ and measure the error in $L^r$-norm.

The function $g$ is not supposed to satisfy a global smoothness criterion, but we aim at estimating it locally in each point $x\in{\cal X}$ as efficiently as possible. The risk will then depend on local regularity properties, which we do not assume to be known. For spatially inhomogeneous functions, in the presence of jumps or for image denoising pointwise adaptive methods are much more appropriate than global smoothing methods. In classical mean regression local adaptivity can be achieved using wavelet thresholding or kernels with locally varying bandwidths, see \cit{Lepskietal} for a discussion. In this ideal situation a data-driven choice among linear empirical quantities is performed. M-estimators are typically nonlinear and the standard approaches do not necessarily transfer directly.  \cit{Brownetal}, for example, use an intermediate data binning and then apply wavelet thresholding to the binned data for median regression. On the other hand, \cit{HallJones}, \cit{Portnoy} and \cit{vandeGeer}
consider kernels, smoothing splines and more general $M$-estimation for quantile regression, but they all use global methods for choosing the tuning parameters like cross-validation or penalisation. Here, we develop a generic algorithm to select optimally among local M-estimators. In contrast to classical model selection, we do not only rely on the estimator values themselves to define a data-driven selection criterion. This has significant advantages in the present case of nonlinear base estimators.

Subsequently, we assume that the statistician has chosen the suitable definition of $\rho$ for the problem at hand and we use the corresponding sample versions to construct base estimators for the (generalized) regression function $g$. In the spirit of classical nonparametrics, we assume that $g$ is locally almost constant around a given point $x\in{\cal X}$. The statistical challenge is to select adaptively the right neighbourhood $U$ of $x$ where a local $M$-estimator is applied.
Let us write
\begin{equation}\label{eqmdef}
 m(Y_i,\,x_i\in U):=\arginf_{\mu\in\R}\Big\{\sum_{i:x_i\in U}\rho(Y_i-\mu)\Big\}
\end{equation}
for the location estimator on the set $U\subset{\cal X}$. If the minimizer is not unique, we just select one of them (e.g., a version of the sample median for $\abs{U}$ even). Note that an extension to general local polynomial or more general local-likelihood estimation is straightforward, but this is not the focus of the present work. For each point $x$ let a family
of nested neighbourhoods $U_0\subset U_1\subset\cdots\subset U_K$ be given and set
\[\tilde\theta_k:=m(Y_i,\,x_i\in U_k).
\]
Then the family $(\tilde\theta_k)_{0\le k\le K}$ forms the class of base estimators and we aim at selecting the best estimator of $\theta:=g(x)$ in this family.

\begin{example} \label{ex1}
Let the design space be ${\cal X}=[0,1]$ with equidistant design
points $x_i=i/n$ and take $\rho(x)=\abs{x}$. Consider the symmetric windows
$U_k=[x-h_k,x+h_k]$ generated by some bandwidths $0\le
h_0<h_1<\cdots<h_K$. Then $\tilde\theta_k$ is the classical
median filter, see e.g. \cit{Truong} or \cit{AriasDonoho}.
\end{example}

Using Lepski's approach as a starting point, we present our procedure to select
optimally among local M-estimators in Section \ref{SecProc}. We argue in a
multiple testing interpretation that our procedure is usually more powerful.
Moreover, it is equally simple to analyze and easy to implement. In Sections
\ref{SecError} and \ref{SecAsymp} we derive exact and asymptotic error bounds
and the latter give optimal minimax rates for H\"older classes. The simulations
in Section \ref{SecSimul} show that our procedure has convincing finite sample
properties. Moreover, they confirm that Lepski's classical method applied to
local median estimators suffers from oversmoothing because changes in the
signal are not detected early enough due to the robustness of the median.
Finally, the procedure has been implemented to denoise dynamical CT image
sequences in Section \ref{SecImage}, which is of key interest when assessing
tumor therapies. Two more technical proofs are postponed to Section
\ref{SecApp}.

\section{The procedure}\label{SecProc}

\subsection{Main ideas}

As a starting point let us consider the standard \cit{Lepski} method for
selecting among $(\tilde\theta_k)_{0\le k\le K}$, given the mean regression
model with $\E[\eps_i]=0$ and $\E[\eps_i^2]<\infty$. Note that the base
estimators are then ordered according to decreasing variances:
$\Var(\tilde\theta_k)\le \Var(\tilde\theta_{k-1})$. On the other hand, the bias
is usually increasing with increasing neighbourhoods $U_k$. This is not always
the case (for example, think of local means for a linear $g$), but true in
particular for the worst case bias over smoothness classes like H\"older balls
of functions. Lepski's method can be understood as a multiple testing procedure
where the hypothesis $H_0(k):\;g|_{U_k}\equiv \theta$ that $g$ is constant on
$U_k$ is tested against the alternative of significant deviations. Always
assuming that $H_0(0)$ is true, we test sequentially whether $H_0(k+1)$ is
acceptable provided that the hypotheses $H_0(\ell)$ have been accepted for all
$\ell\le k$. Once the test of an $H_0(k+1)$ is rejected, we select the base
estimator $\tilde\theta_k$ corresponding to the last accepted hypothesis. The
main point is thus to properly define the single significance tests for
$H_0(k+1)$. Lepski's method accepts $H_0(k+1)$ if
$\abs{\tilde\theta_{k+1}-\tilde\theta_\ell}\le z_\ell^{(k+1)}$ holds for all
$\ell\le k$ with suitable critical values $z_\ell^{(k+1)}>0$. The wide
applicability and success of Lepski's method is also due to this very simple
and intuitive test statistics.

In our nonlinear estimation case it turns out that tests for $H_0(k+1)$ based on the differences of base estimators are often not optimal. To understand this fact, let us consider a toy model of two neighbourhoods $U_1\subset U_2$ with a piecewise constant median regression function $g$ equal to $\mu_1$ on $U_1$ and to $\mu_2$ on $U_2\setminus U_1$. The procedure therefore reduces to a simple two-sample location test between the observations in $U_1$ and in $U_2\setminus U_1$. We proceed by considering abstractly a two-sample location test where the first sample $Y_1,\ldots,Y_n$ is i.i.d. with density $f_1(x)=\frac1{2\sigma} \exp(-\abs{x-\mu_1}/\sigma)$ and the second independent sample $Y_{n+1},\ldots,Y_{2n}$ is i.i.d. with density $f_2(x)=\frac1{2\sigma} \exp(-\abs{x-\mu_2}/\sigma)$. Our goal is to test $H_0: \mu_1=\mu_2$ for known $\sigma>0$. Given the Laplace distribution, we follow Lepski's idea and put
$\tilde m_1=\med(Y_i,\,i=1,\ldots,n)$, the median over the first sample, and $\tilde m_2=\med(Y_i,\,i=1,\ldots,2n)$, the median over both samples. Then the test rejects if $T_L:=2\abs{\tilde m_1-\tilde m_2}>z$ holds for appropriate $z>0$.
A more classical approach, though, relies on a likelihood ratio (LR) test or on a Wald-type test using the maximum likelihood estimator for $\mu_1-\mu_2$. Since the LR test is not as simple, we focus on the Wald-test statistic which is given by the difference of
the medians over the two samples. Hence, we reject $H_0$ if $T_W:=\abs{\med(Y_i,\,i=1,\ldots,n)-\med(Y_i,\,i=n+1,\ldots,2n)}>z$ holds for appropriate $z>0$.
The following asymptotic result for the two test statistics is proved in Section \ref{SecApp1}.

\begin{proposition}\label{PropTest}
Let $f:\R\to\R^+$ be a symmetric and continuous density with $f(0)>0$ and let $Y_1,\ldots,Y_n\sim f$, $Y_{n+1},\ldots,Y_{2n}\sim f(\cdot-\Delta)$ with $\Delta>0$ be independently distributed. Then with $F$ denoting the cumulative distribution function of $f$ we obtain for $n\to\infty$
\begin{align*}
 &\sqrt{n}\Big(\med(Y_i,\,i=n+1,\ldots,2n)-\med(Y_i,\,i=1,\ldots,n)-\Delta\Big) \Rightarrow N(0,\sigma_W^2)\\
&\quad\text{ with } \sigma_W^2=\frac{1}{2f^2(0)},\\
&\sqrt{n}\Big(2\Big(\med(Y_i,\,i=1,\ldots,2n)-\med(Y_i,\,i=1,\ldots,n)\Big)-\Delta\Big) \Rightarrow N(0,\sigma_L^2)\\
&\quad\text{ with } \sigma_L^2=
\frac{2F(\Delta/2)(1-F(\Delta/2))}{f^2(\Delta/2)}+\frac{1}{f^2(0)}-\frac{2(1-F(\Delta/2))}{f(0)f(\Delta/2)}.
\end{align*}
In particular, for $\Delta=0$ we have $\sigma_L^2=\sigma_W^2$ and for $\Delta\to 0$ we have the order $\sigma_L^2=\sigma_W^2(1+2\Delta f(0)+O(\Delta^2f(0)))$, provided $f$ is Lipschitz continuous at zero.
\end{proposition}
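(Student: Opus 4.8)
The plan is to reduce all three medians appearing in the two statistics to asymptotically linear (Bahadur) representations driven by a single joint central limit theorem, and then to read off the limiting variances and covariance from elementary indicator computations. Write $\hat F^{(1)}_n$, $\hat F^{(2)}_n$ for the empirical distribution functions of the first and second sample, and abbreviate $m_1=\med(Y_i,\,i\le n)$, $m_2=\med(Y_i,\,n<i\le 2n)$, $m_{12}=\med(Y_i,\,i\le 2n)$; these solve $\hat F^{(1)}_n(m_1)=1/2$, $\hat F^{(2)}_n(m_2)=1/2$ and $\tfrac12(\hat F^{(1)}_n(m_{12})+\hat F^{(2)}_n(m_{12}))=1/2$. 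First I would invoke the classical quantile asymptotics: whenever a median has true value $q$ with positive density $d$ there, $\sqrt n(\hat m-q)=d^{-1}\sqrt n(1/2-\hat F(q))+o_P(1)$, so the problem becomes linear in the centred indicators $\mathbf 1_{\{Y_i\le\cdot\}}-F(\cdot)$. Joint asymptotic normality of the indicator averages at the two thresholds $0$ and $\Delta/2$, over the two samples, is then just a multivariate CLT, and the delta step transfers it to the medians.

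Second I would fix the centring constants and local densities, where the symmetry $F(-x)=1-F(x)$ does all the work. The first sample has median $0$ with density $f(0)$; the second has median $\Delta$, where the shifted density again takes the value $f(0)$. For the pooled sample the mixture c.d.f. is $G=\tfrac12F(\cdot)+\tfrac12F(\cdot-\Delta)$, and $G(\Delta/2)=\tfrac12(F(\Delta/2)+F(-\Delta/2))=\tfrac12$, so its median is $\Delta/2$; differentiating, the pooled density there equals $\tfrac12f(\Delta/2)+\tfrac12f(-\Delta/2)=f(\Delta/2)$ (positivity of $f$ at $\Delta/2$ is exactly what makes $\sigma_L^2$ finite). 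The Wald statistic is then immediate: $m_1$ and $m_2$ are independent and each $\sqrt n$-asymptotically normal with variance $\tfrac14 f(0)^{-2}$, so their difference has variance $\sigma_W^2=\tfrac12 f(0)^{-2}$, and consistency of $m_1\to0$, $m_2\to\Delta$ shows $\Delta$ is the right centring.

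The genuinely delicate part is the Lepski statistic, since $m_1$ and $m_{12}$ share the first sample and are dependent. Writing $A=\sqrt n(m_{12}-\Delta/2)=f(\Delta/2)^{-1}\sqrt n(1/2-\hat G_{2n}(\Delta/2))+o_P(1)$ with $\hat G_{2n}=\tfrac12(\hat F^{(1)}_n+\hat F^{(2)}_n)$, and $B=\sqrt n\, m_1=f(0)^{-1}\sqrt n(1/2-\hat F^{(1)}_n(0))+o_P(1)$, the statistic is $\sqrt n(2(m_{12}-m_1)-\Delta)=2(A-B)$, so $\sigma_L^2=4(\Var A+\Var B-2\Cov(A,B))$. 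Here $\Var B\to\tfrac14 f(0)^{-2}$ using $F(0)=1/2$, and the mixture structure gives $\Var A\to \tfrac{F(\Delta/2)(1-F(\Delta/2))}{2f(\Delta/2)^2}$. The covariance receives a contribution only from the shared first sample, and the crucial elementary observation is that for $\Delta>0$ the event $\{Y_i\le0\}$ is contained in $\{Y_i\le\Delta/2\}$, whence for $Y_i\sim f$ one gets $\Cov(\mathbf 1_{\{Y_i\le\Delta/2\}},\mathbf 1_{\{Y_i\le0\}})=\tfrac12-\tfrac12F(\Delta/2)$; this yields $\Cov(A,B)\to\tfrac{1-F(\Delta/2)}{4f(0)f(\Delta/2)}$, and assembling the three terms reproduces the stated $\sigma_L^2$. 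I expect the bookkeeping of this covariance, together with verifying that the two $o_P$ remainders are \emph{jointly} negligible, to be the main obstacle.

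Finally, for the two limiting regimes I would substitute. At $\Delta=0$ one has $F(\Delta/2)=1/2$ and $f(\Delta/2)=f(0)$, and the three terms collapse to $\tfrac12 f(0)^{-2}=\sigma_W^2$. For $\Delta\to0$ I would expand $F(\Delta/2)=\tfrac12+\tfrac\Delta2 f(0)+o(\Delta)$ (continuity of $f$) and $f(\Delta/2)=f(0)+O(\Delta)$ (Lipschitz continuity at $0$); the pleasant point, which is precisely why only Lipschitz continuity is needed, is that the first-order contributions of the ratio $f(\Delta/2)/f(0)$ cancel between the $\Var A$ term and the cross term, leaving only the first-order contribution $\Delta f(0)^{-1}$ coming from $1-F(\Delta/2)$. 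Hence $\sigma_L^2=\tfrac{1}{2f(0)^2}(1+2\Delta f(0)+O(\Delta^2))=\sigma_W^2(1+2\Delta f(0)+O(\Delta^2 f(0)))$.
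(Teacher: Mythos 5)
Your proof is correct and takes essentially the same route as the paper: both arguments rest on a joint asymptotic linearization of the dependent medians $m_1$ and $m_{12}$ (you phrase it as Bahadur representations plus a multivariate CLT; the paper writes the pooled median as the functional $(F^n+F^n_\Delta)^{-1}(1)$ of the two empirical distribution functions and applies the functional delta method from van der Vaart), and your indicator-covariance computation $\Cov\bigl(\mathbf 1_{\{Y_i\le\Delta/2\}},\mathbf 1_{\{Y_i\le 0\}}\bigr)=\tfrac12\bigl(1-F(\Delta/2)\bigr)$ is exactly the Brownian-bridge covariance $\E[B^1(F(\Delta/2))B^1(1/2)]$ that drives the paper's variance formula, just as your use of symmetry to identify the pooled median $\Delta/2$ and pooled density $f(\Delta/2)$ mirrors the paper's simplification. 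The only stylistic differences are that the paper organizes the small-$\Delta$ expansion through the substitution $R=F/f$ while you Taylor-expand directly (both exploit the same cancellation of the $f(\Delta/2)$-perturbation between the variance and cross terms), and that your flagged ``main obstacle'' --- justifying the linearization for the pooled, non-i.i.d.\ sample with jointly negligible remainders --- is precisely the step the paper settles by its two-dimensional delta-method argument, so it is a known technical point rather than a gap.
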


Putting $\Delta=\abs{\mu_1-\mu_2}$ this result shows that under $H_0$, i.e. $\Delta=0$, the test statistics $T_L$ and $T_W$ are asymptotically identically distributed, whereas $T_L$ has a larger asymptotic variance under any alternative $\Delta>0$ than $T_W$. In the above Laplace model with densities $f_1,f_2$ this deterioration is only negligible if the signal-to-noise ratio satisfies $\abs{\mu_1-\mu_2}/\sigma\ll 1$. This is exactly what we see in simulations, see e.g. Example 1 in Section \ref{SecSimul} below. Since the Laplace model
is Hellinger differentiable, the Wald-type test is (locally) asymptotically efficient for $n\to\infty$ as is the LR test, see e.g. \cit{vanderVaart}. Strictly speaking, when considering local alternatives for fixed $\sigma>0$ and $n\to\infty$, i.e. $\abs{\mu_1-\mu_2}=O(n^{-1/2})$, then
the deterioration in using $T_L$ becomes also negligible. From a practical perspective, these local asymptotics are often not adequate, e.g. in image denoising, where we face relatively large signal differences $\Delta$ at borders between objects and do not dispose of a very large number $n$ of observed pixels.

More generally, two-sample location tests can naturally be based on the
difference of the in-sample location estimators. In consequence, we proceed
differently in testing the hypotheses $H_0(k+1)$ of homogeneity: When the
hypotheses $H_0(\ell)$ for $\ell\le k$ have been accepted, we ask whether the
observations $Y_i$ in the new points $x_i\in U_{k+1}\setminus U_k$ are
homogeneous with those in $U_\ell$ for $\ell\le k$. This means that our tests
reject if the empirical location in the additional data
\[\tilde\theta_{(k+1)\setminus k}:=m(Y_i,\,x_i\in U_{k+1}\setminus U_k)\]
satisfies with certain critical values $z_\ell^{(k+1)}>0$:
\[ \exists\ell\le k:\:\abs{\tilde\theta_{(k+1)\setminus k}-\tilde\theta_\ell}>z_\ell^{(k+1)}.\]
As in Lepski's method, it is necessary to perform the testing for all $\ell\le
k$ and not only with $\ell=k$ to avoid that the signal slowly drifts away as
the neighbourhoods grow. In most cases, though, $H_0(k+1)$ will be rejected
because the new piece $\tilde\theta_{(k+1)\setminus k}$ is not in line with
$\tilde\theta_k$: due to the smaller variance of $\tilde\theta_k$ compared to
$\tilde\theta_\ell$, $\ell< k$, this last test is the most powerful. It is then
interesting to observe that for linear $m$ the test statistic
$\tilde\theta_{(k+1)\setminus k}-\tilde\theta_k$ is just a multiple of
$\tilde\theta_{k+1}-\tilde\theta_k$. Consequently, for mean regression with
linear base estimators our method will not differ much from Lepski's standard
method, whereas the general nonlinear M-estimators are treated in a
significantly different way, note also the numerical results in Section
\ref{SecSimul}.

Observe that our approach breaks an ubiquitous paradigm in modern statistics and learning theory (see e.g. \cit{Massart} for model selection or \cit{Tsybakov} for aggregation): we select the best base learner among $(\tilde\theta_k)$ in a data-driven way not only based on the estimator values themselves, but additionally on the statistics $(\tilde\theta_{(k+1)\setminus k})$. Not only in the abstract modeling above, but also in implementations this idea turns out to be very advantageous for nonlinear estimators.

\subsection{The algorithm}\label{SecAlgo}

We want to select the best estimator among the family
$\{\tilde\theta_k\,|\,k=0,\ldots,K\}$. Considering the law
$\PP_0$ generated by the no-bias setting $g\equiv 0$, we
introduce the stochastic error levels
\begin{equation}\label{EqStochErr}
s_j:=\E_0[\abs{\tilde\theta_j}^r]^{1/r},\quad
s_{kj}:=\E_0[\abs{\tilde\theta_{(k+1)\setminus
k}-\tilde\theta_j}^r]^{1/r}.
\end{equation}
We apply the following sequential procedure for prescribed critical values
$(z_j)_{j=0,\ldots,K-1}$ and set $z_K:=1$:
{\tt
\begin{itemize}
\item initialize $k:=0$;
\item repeat\\
\hspace*{3mm} if for all $j=0,\ldots,k$
\[\abs{\tilde\theta_{(k+1)\setminus
k}-\tilde\theta_j}\le z_j s_{kj}+ z_{k+1} s_{k+1}
\]
\hspace*{6mm} then increase $k$\\
\hspace*{6mm} else stop\\
until $k=K$;
\item put $\hat k:=k$ and $\hat\theta:=\tilde\theta_{\hat k}$.
\end{itemize}}
This algorithm to determine $\hat k$ can be cast in one
formula:
\[ \hat k:=\inf\Big\{k\ge 0\,\Big|\,\exists j\le k:\:
\abs{\tilde\theta_{(k+1)\setminus
k}-\tilde\theta_j}> z_j s_{kj}+ z_{k+1}
s_{k+1}\Big\}\wedge K.
\]

\section{Error analysis}\label{SecError}

\subsection{Propagation and stopping late}

We need a very natural property of the $M$-estimator.

\begin{assumption}\label{AssMean}
 The location estimator in \eqref{eqmdef} satisfies for any set $S$ and any partition $S=\bigcup_j S_j$ with pairwise disjoint sets $S_j$:
\[ \textstyle\min_j m(Y_i,\,x_i\in S_j) \le m(Y_i,\,x_i\in S)\le \max_j m(Y_i,\,x_i\in S_j).\]
\end{assumption}

\begin{lemma}\label{LemMeanconvex}
 If the function $\rho$ is strictly convex, then Assumption \ref{AssMean} is satisfied.
\end{lemma}

\begin{proof}
 Let us write $m_T$ as short-hand for $m(Y_i,\,x_i\in T)$, $T\subset{\cal X}$. Denoting by $\rho'_+,\rho'_-$ the right- and left-handed derivatives of the convex function $\rho$, the functions $\rho'_+$, $\rho'_-$ are strictly increasing with $\rho'_+(x)< \rho'_-(y)\le \rho'_+(y)$ for all $x<y$ and
\[ \sum_{x_i\in T}\rho'_-(Y_i-m_T)\le 0,\quad \sum_{x_i\in T}\rho'_+(Y_i-m_T)\ge 0.
\]
If $m_S<m_{S_j}$ were true for all $j$, then
\[ \sum_{x_i\in S}\rho'_-(Y_i-m_S)>
\sum_j\sum_{x_i\in S_j}\rho'_+(Y_i-m_{S_j})\ge 0,
\]
which contradicts the minimizing property of $m_S$. Hence, $m_S\ge\min_j m_{S_j}$ holds and a symmetric argument shows $m_s\le \max_jm_{S_j}$.
\end{proof}

\begin{remark}
 If $\rho$ is not strictly convex, then we usually impose additional conditions to define $m$ uniquely. For any reasonable specific choice Assumption \ref{AssMean} should be satisfied. In particular, this is true for the sample median where we take for an even number $N$ of data points $Y_i$ the mean $(Y_{(N/2)}+Y_{(1+N/2)})/2$ of the order statistics.
\end{remark}

\begin{proposition}\label{PropLateErr}
Grant Assumption \ref{AssMean}. Then we have for any $k=0,\ldots,K-1$
\[
\abs{\hat\theta-\tilde\theta_k}{\bf 1}(\hat k>k)\le
\max_{j=k+1,\ldots,K-1} \big(z_k s_{jk}+ z_{j+1} s_{j+1}\big).
\]
\end{proposition}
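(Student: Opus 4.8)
The plan is to combine the sandwiching property furnished by Assumption~\ref{AssMean} with the acceptance inequalities that are in force on the event $\{\hat k>k\}$. First I would unwind the stopping rule. Since $\hat k$ is the first index at which some test rejects, on $\{\hat k>k\}$ no rejection has occurred at any index $k'\in\{k,\ldots,\hat k-1\}$, so that for each such $k'$ and every $\ell\le k'$ we have $\abs{\tilde\theta_{(k'+1)\setminus k'}-\tilde\theta_\ell}\le z_\ell s_{k'\ell}+z_{k'+1}s_{k'+1}$. The point is to always pick the comparison index $\ell=k$, which is admissible because $k\le k'$; this yields, for every $k'\in\{k,\ldots,\hat k-1\}$, the uniform bound
\[\abs{\tilde\theta_{(k'+1)\setminus k'}-\tilde\theta_k}\le z_k s_{k'k}+z_{k'+1}s_{k'+1}.\]

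Next I would write $U_{\hat k}$ as the disjoint union $U_{\hat k}=U_k\cup\bigcup_{j=k}^{\hat k-1}(U_{j+1}\setminus U_j)$ and apply Assumption~\ref{AssMean} to this partition. It tells us that the location estimator $\hat\theta=\tilde\theta_{\hat k}$ lies between the minimum and the maximum of the block estimators $\tilde\theta_k$ and $\tilde\theta_{(j+1)\setminus j}$, $j=k,\ldots,\hat k-1$. Writing $M:=\max_{j=k,\ldots,\hat k-1}\abs{\tilde\theta_{(j+1)\setminus j}-\tilde\theta_k}$, every block estimator---including $\tilde\theta_k$ itself---lies in the interval $[\tilde\theta_k-M,\tilde\theta_k+M]$; hence so do their minimum and maximum, and therefore the sandwiched value $\tilde\theta_{\hat k}$ does as well. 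This gives $\abs{\hat\theta-\tilde\theta_k}\le M$ at once, with no case distinction needed.

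Substituting the first display into $M$ and then enlarging the range of the maximum from the random set $\{k,\ldots,\hat k-1\}$ to the deterministic set $\{k,\ldots,K-1\}$---legitimate since $\hat k\le K$ and every summand $z_k s_{jk}+z_{j+1}s_{j+1}$ is nonnegative---produces the asserted bound on $\{\hat k>k\}$; off this event the indicator vanishes and nothing has to be shown. I expect the only delicate point to be the bookkeeping in the second step: one must set up the partition so that $U_k$ appears as a single block (so that $\tilde\theta_k$ is genuinely one of the extremal estimators) and then invoke Assumption~\ref{AssMean} with precisely this partition. Everything else---translating test acceptance into the pointwise inequalities via the choice $\ell=k$, and passing to the maximum---is routine.
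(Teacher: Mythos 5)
Your proof is correct and follows essentially the same route as the paper: Assumption \ref{AssMean} applied to the partition of $U_{\hat k}$ into $U_k$ and the annuli $U_{j+1}\setminus U_j$, so that $\tilde\theta_{\hat k}$ is sandwiched by the block estimators, combined with the accepted tests at comparison index $\ell=k$. One remark: your maximum correctly runs over $j=k,\ldots,K-1$ (the annulus $U_{k+1}\setminus U_k$ is controlled by the test at step $k$, contributing the term $z_k s_{kk}+z_{k+1}s_{k+1}$), whereas the range $j=k+1,\ldots,K-1$ in the proposition's statement appears to be an off-by-one slip that the paper's own proof shares after its re-indexing $\tilde\theta_{j\setminus(j-1)}$.
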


\begin{remark}
This error propagation result is true '$\omega$-wise', that is, it does not depend on
the noise realisation. It is built into the construction of the selection
procedure. An analogous result holds for Lepski's original procedure \cite{Lepski,Lepskietal}.
\end{remark}

\begin{proof}
From Assumption \ref{AssMean} we infer for $\ell>k$
\[ \abs{\tilde\theta_\ell-\tilde\theta_k}\le \max_{k+1\le j\le
\ell}\abs{\tilde\theta_{j\setminus(j-1)}-\tilde\theta_k}.
\]
We therefore obtain on the event $\{\hat k>k\}$ by construction
\begin{align*}
\abs{\hat\theta-\tilde\theta_k}&\le \max_{j=k+1,\ldots,\hat k}
\abs{\tilde\theta_{j\setminus(j-1)}-\tilde\theta_k}\le
\max_{j=k+1,\ldots,K-1} \big( z_k s_{jk}+ z_{j+1} s_{j+1} \big).
\end{align*}

\end{proof}

\begin{example}
For geometrically decreasing stochastic error levels $s_k$ in \eqref{EqStochErr}, in particular for the median
filter from Example \ref{ex1} with bandwidths $h_k=h_0q^k$,  we have $s_{jk}\lesssim s_k$
for $j>k$, where $A\lesssim B$ means $A={\cal O}(B)$ in the ${\cal O}$-notation. The late stopping error is of order $z_k^rs_k^r$,
provided the critical values $(z_k)$ are non-increasing. This will
imply that the error due to stopping later than some optimal
$k^\ast$ is increased by at most the order of $z_{k^\ast}^r$:
\[ \E_\theta[\abs{\hat\theta-\theta}^r{\bf 1}(\hat k>k^\ast)]
\lesssim \E_\theta[\abs{\tilde\theta_{k^\ast}-\theta}^r]+ z_{k^\ast}^rs_{k^\ast}^r
\le (1+z_{k^\ast}^r)\E_\theta[\abs{\tilde\theta_{k^\ast}-\theta}^r].
\]
\end{example}

\subsection{Critical values and stopping early}

As the preceding analysis shows, small critical values $(z_k)$ lead to small errors caused by stopping late. On the other hand, the $(z_k)$ should not be too small in order to control the error of stopping early.
To this end, we shall require a condition on the critical values $(z_k)$ in the
no-bias situation under $\PP_0$, that is for constant $g\equiv 0$. In
fact, we face a multiple testing problem, but with an
estimation-type loss function. For some confidence parameter
$\alpha>0$ we select $z_k>0$, $k=0,\ldots,K-1$, such that the condition
\begin{equation}\label{eqhypzkgen}
 \sum_{j=0}^{K-1}\E_0\Big[\abs{\tilde\theta_j}^r {\bf
1}\big(\exists \ell\le j:\; \abs{\tilde\theta_{(j+1)\setminus j}-\tilde\theta_\ell}>z_\ell s_{j\ell}\big)\Big]
\le\alpha s_K^r
\end{equation}
is satisfied. In order to obtain a unique prescription for each $z_k$ that equilibrates the errors for different stopping times of the algorithm, we can select the $(z_k)$ sequentially. We choose $z_0$ such that
\[ \sum_{j=0}^{K-1}\E_0\Big[\abs{\tilde\theta_j}^r {\bf
1}\big(\abs{\tilde\theta_{(j+1)\setminus j}-\tilde\theta_0}>z_0s_{j0}\big)\Big]
\le\tfrac{\alpha}{K} s_K^r
\]
and then each $z_k$ for given $z_0,\ldots,z_{k-1}$ such that
\begin{equation}\label{eqhypzk}
\sum_{j=k}^{K-1}\E_0\Big[\abs{\tilde\theta_j}^r {\bf
1}\Big(\abs{\tilde\theta_{(j+1)\setminus j}-\tilde\theta_k}>z_ks_{jk}\text{, }\forall \ell<k:\;
\abs{\tilde\theta_{(j+1)\setminus j}-\tilde\theta_\ell}\le z_\ell s_{j\ell}\Big)\Big]
\le\tfrac{\alpha}{K} s_K^r.
\end{equation}
To determine the $(z_k)$ in practice, we simulate in Monte Carlo iterations the pure noise
case $g\equiv0$ and calculate for each $k$ the error when the algorithm stops before the
(theoretically optimal) index $K$ due to a rejected test involving $z_k$. The critical values are determined such that this error is a fraction of the oracle estimation error $s_K^r$. For this calibration step the original
algorithm of Section \ref{SecAlgo} is taken, only modified by using $z_js_{kj}$ instead of $z_js_{kj}+z_{k+1}s_{k+1}$
in the testing parts.

The selection rule for the critical values in Lepski's procedure is the focus in the work by \cit{SpokVial}. Their idea is to transfer properties from the no-bias situation to the general nonparametric specification by bounding the likelihood between the two observation models. This approach, the so-called small modeling bias condition, could be applied here as well and will give similar results. On a practical level, the difference is that \cit{SpokVial} enlarge the moment from $r$ to $2r$ in the calibration step, while we add the term $z_{k+1}s_{k+1}$ to the testing values $z_js_{kj}$ from the calibration. In the asymptotic analysis, however, the method by \cit{SpokVial} costs us some power in the logarithmic factor and we would thus not attain optimal rates over H\"older balls, cf. Section \ref{SecAsymp}. Moreover, for robustness reasons, we do not want to require higher moment bounds for the error variables and the likelihood.

\begin{definition}
Given the regression function $g$, introduce its variation on $U_k$
\[\V_k(g):=\sup_{y_1,y_2\in U_k}\abs{g(y_1)-g(y_2)}\]
and consider the oracle-type
index
\[ k^\ast:=\min \{k=0,\ldots,K-1\,|\, \V_{k+1}(g)> z_{k+1}s_{k+1}\}\wedge K.
\]
\end{definition}

This definition implies that for all $k\le k^\ast$
the maximal bias $V_k(g)$ of $\tilde\theta_k$ is less than its stochastic error level $s_k$ from \eqref{EqStochErr} times the critical value $z_k$. The
next result, when specialised to $k=k^\ast$, means intuitively that
the error due to stopping before $k^\ast$ can be bounded in terms of
the stochastic error of $\tilde\theta_{k^\ast}$, involving the
critical value $z_{k^\ast}$ as a factor. Let us also mention here that the rationale for the choice $z_K=1$ in the algorithm of Section \ref{SecAlgo} is to equilibrate maximal bias and stochastic error at step $k=K-1$.

\begin{proposition}\label{PropEarlyrr}
We have for any $k=0,\ldots,k^\ast$
\[
\E\big[\abs{\hat\theta-\tilde\theta_k}^r{\bf 1}(\hat k<k)\big]\le
(3^{r-1}\vee 1)(z_k^r+1+\alpha)s_k^r.
\]
\end{proposition}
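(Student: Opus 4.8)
The plan is to reduce everything to the no-bias situation $\PP_0$ by a monotone coupling and then to invoke the calibration \eqref{eqhypzkgen}. Since the whole procedure and the loss $\abs{\hat\theta-\tilde\theta_k}$ are equivariant under adding a constant to all $Y_i$, I may assume $\theta=g(x)=0$. Writing $Y_i=g(x_i)+\eps_i$, I would introduce the pure-noise estimators $\tilde\theta^0_\bullet:=m(\eps_i,\,x_i\in\bullet)$; under $\PP_\theta$ the family $(\tilde\theta^0_j,\tilde\theta^0_{(j+1)\setminus j})$ has precisely the law that $(\tilde\theta_j,\tilde\theta_{(j+1)\setminus j})$ has under $\PP_0$, which is the bridge to \eqref{eqhypzkgen} and, in particular, gives $\E[\abs{\tilde\theta^0_k}^r]=s_k^r$ by the definition \eqref{EqStochErr} of $s_k$.

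The first ingredient is a deterministic shift bound for convex $M$-estimators (needing only convexity of $\rho$, not Assumption \ref{AssMean}): if $A\le g(x_i)\le B$ for all $x_i\in S$, then $m(\eps_i,S)+A\le m(Y_i,S)\le m(\eps_i,S)+B$. This follows by evaluating the one-sided derivatives of $\mu\mapsto\sum_{x_i\in S}\rho(Y_i-\mu)$ at $m(\eps_i,S)+B$ and at $m(\eps_i,S)+A$ and using monotonicity of $\rho'_\pm$, in the spirit of Lemma \ref{LemMeanconvex}. On a window $U_j$ the function $g$ ranges in an interval $[A_j,B_j]$ with $B_j-A_j=\V_j(g)$ and $0\in[A_j,B_j]$, so $\tilde\theta_S-\tilde\theta^0_S\in[A_j,B_j]$ for every $S\subset U_j$. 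The crucial consequence is that the bias of a \emph{difference} of two such estimators is governed by the oscillation on the larger window rather than by the sum of two individual biases: for $\ell\le m$ and $m+1\le k$,
\[\babs{(\tilde\theta_{(m+1)\setminus m}-\tilde\theta_\ell)-(\tilde\theta^0_{(m+1)\setminus m}-\tilde\theta^0_\ell)}\le\V_{m+1}(g),\qquad \babs{(\tilde\theta_m-\tilde\theta_k)-(\tilde\theta^0_m-\tilde\theta^0_k)}\le\V_k(g),\]
because both shifts lie in the common interval attached to $U_{m+1}$, resp.\ $U_k$.

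Next I would decompose over the disjoint events $\{\hat k=m\}$, $m=0,\ldots,k-1$, on which $\hat\theta=\tilde\theta_m$. Combining the second bound with $\abs{\tilde\theta^0_m-\tilde\theta^0_k}\le\abs{\tilde\theta^0_m}+\abs{\tilde\theta^0_k}$ and with $\V_k(g)\le z_ks_k$ (valid as $k\le k^\ast$) yields the three-term split $\abs{\tilde\theta_m-\tilde\theta_k}\le\abs{\tilde\theta^0_m}+\abs{\tilde\theta^0_k}+z_ks_k$, hence $\abs{\tilde\theta_m-\tilde\theta_k}^r\le3^{r-1}(\abs{\tilde\theta^0_m}^r+\abs{\tilde\theta^0_k}^r+z_k^rs_k^r)$ by convexity (recall $r\ge1$). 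Summing the last two terms over $m<k$ and bounding ${\bf1}(\hat k<k)\le1$ gives $\E[\abs{\tilde\theta^0_k}^r]=s_k^r$ and $z_k^rs_k^r\,\PP(\hat k<k)\le z_k^rs_k^r$, the $s_k^r$ and $z_k^rs_k^r$ contributions.

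The only genuinely delicate point, and the expected main obstacle, is the remaining sum $\sum_{m<k}\E[\abs{\tilde\theta^0_m}^r{\bf1}(\hat k=m)]$. On $\{\hat k=m\}$ the test at step $m$ rejects, i.e.\ $\abs{\tilde\theta_{(m+1)\setminus m}-\tilde\theta_\ell}>z_\ell s_{m\ell}+z_{m+1}s_{m+1}$ for some $\ell\le m$. Here the first difference bound enters decisively: since $m+1\le k\le k^\ast$ forces $\V_{m+1}(g)\le z_{m+1}s_{m+1}$, subtracting the bias leaves $\abs{\tilde\theta^0_{(m+1)\setminus m}-\tilde\theta^0_\ell}>z_\ell s_{m\ell}$, so the additive term $z_{m+1}s_{m+1}$ built into the algorithm is exactly what pushes the true rejection event into the pure-noise rejection event of \eqref{eqhypzkgen}. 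Thus $\{\hat k=m\}$ is contained in $\{\exists\ell\le m:\abs{\tilde\theta^0_{(m+1)\setminus m}-\tilde\theta^0_\ell}>z_\ell s_{m\ell}\}$; summing over $m$, dropping the restriction $m<k$, and passing to $\PP_0$ through the coupling bounds the sum by $\alpha s_K^r\le\alpha s_k^r$. Collecting the three contributions gives $3^{r-1}(z_k^r+1+\alpha)s_k^r$. The hard part is the bias-cancellation in the difference bound: estimating each estimator's bias separately would cost $\V_{m+1}(g)+\V_\ell(g)$, and the single correction $z_{m+1}s_{m+1}$ would then be insufficient; it is the monotone shift bound for convex $\rho$ that collapses the difference-bias to the single oscillation $\V_{m+1}(g)$, and checking that this persists for the chosen (possibly non-unique) minimizer is the step requiring care.
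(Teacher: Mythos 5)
Your proof is correct and follows essentially the same route as the paper's: the same decomposition over the events $\{\hat k=m\}$, the same bias-cancellation inequality (the paper's \eqref{eqEstg0}, stated there for $\tilde\theta_j,\tilde\theta_k$ and for $\tilde\theta_{(j+1)\setminus j},\tilde\theta_\ell$), the same use of $\V_{m+1}(g)\le z_{m+1}s_{m+1}$ for $m<k\le k^\ast$ to absorb the additive term $z_{m+1}s_{m+1}$ and land in the calibration event \eqref{eqhypzkgen}, and the same final assembly via isotonic decay of $(s_k)$. The only difference is presentational: the paper writes the pure-noise quantities as $\tilde\theta_j(0)$ on the common probability space rather than as a coupling, and asserts the monotone shift bound without the explicit one-sided-derivative justification you supply.
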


\begin{proof}
We shall write $\hat k(g)$, $\tilde\theta_k(g)$ etc. to indicate
that $\hat k$, $\tilde\theta_k$ etc. depend on the underlying
regression function $g$. We shall need the inequality
\begin{equation}\label{eqEstg0}
 \abs{\tilde\theta_j(g)-\tilde\theta_k(g)}\le \abs{\tilde\theta_j(0)-\tilde\theta_k(0)}+V_k(g)\text { for } j<k
\end{equation}
which follows from
\begin{align*}
\tilde\theta_j(g)-\tilde\theta_k(g) &=m(g(x_i)+\eps_i,\,x_i\in U_j)-m(g(x_i)+\eps_i,\,x_i\in U_k)\\
&\le m(\eps_i,\,x_i\in U_j)+\sup_{x\in U_j}g(x)-m(\eps_i,\,x_i\in U_k)-\inf_{x\in U_k}g(x)\\
&\le \tilde\theta_j(0)-\tilde\theta_k(0)+V_k(g)
\end{align*}
and by a symmetric argument for $\tilde\theta_k(g)-\tilde\theta_j(g)$.

By definition of $k^\ast$ and using the
condition on the $(z_k)$ as well as \eqref{eqEstg0} for $\tilde\theta_j$ and $\tilde\theta_{(j+1)\setminus j}$, we obtain for all $k\le k^\ast$
\begin{align*}
&\E\big[\abs{\hat\theta(g)-\tilde\theta_k(g)}^r{\bf 1}(\hat k(g)<k)\big]\\
&=\sum_{j=0}^{k-1}\E\big[\abs{\tilde\theta_j(g)-\tilde\theta_k(g)}^r{\bf
1}(\hat k(g)=j)\big]\\
&\le \sum_{j=0}^{k-1}\E\big[(\V_k(g) +
\abs{\tilde\theta_j(0)}+\abs{\tilde\theta_k(0)})^r{\bf
1}(\hat k(g)=j)\big]\\
&\le (3^{r-1}\vee
1)\Big(\V_k(g)^r+\E[\abs{\tilde\theta_k(0)}^r]+\\
&\quad+ \sum_{j=0}^{k-1}\E\big[\abs{\tilde\theta_j(0)}^r {\bf
1}\big(\exists \ell\le j:\:\abs{\tilde\theta_{(j+1)\setminus j}(g)-\tilde\theta_\ell(g)}> z_\ell s_{j\ell}+ z_{j+1} s_{j+1}\big)\big]\Big)\\
&\le (3^{r-1}\vee
1)\Big(z_k^r s_k^r+s_k^r+\\
&\quad +\sum_{j=0}^{k-1}\E\big[\abs{\tilde\theta_j(0)}^r {\bf
1}\big(\exists \ell\le j:\:\abs{\tilde\theta_{(j+1)\setminus j}(0)-\tilde\theta_\ell(0)}+V_{j+1}(g)> z_\ell s_{j\ell}+ z_{j+1} s_{j+1}\big)\big]\Big)\\
&\le (3^{r-1}\vee 1)\Big(z_k^rs_k^r+s_k^r+\alpha s_K^r\Big).
\end{align*}
The result follows from the isotonic decay of $(s_k)$.
\end{proof}

\subsection{Total risk bound}

\begin{theorem}\label{thmrisk}
Assume that $(z_ks_k)$ is non-increasing in $k$. Then under Assumption \ref{AssMean} the following
excess risk estimate holds for all $k\le k^\ast$:
\[ \E[\abs{\hat\theta-\tilde\theta_k}^r]\le
(3^{r-1}\vee 1)\Big( (2z_k^r+1+\alpha)s_k^r +
z_k^r\max_{j=k+1,\ldots,K-1}s_{jk}^r\Big).
\]
\end{theorem}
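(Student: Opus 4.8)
The plan is to split the excess risk according to the position of the data-driven index $\hat k$ relative to $k$ and to recognise that the two preceding propositions already dispose of the two non-trivial regimes. Writing
\[
\E[\abs{\hat\theta-\tilde\theta_k}^r] = \E[\abs{\hat\theta-\tilde\theta_k}^r{\bf 1}(\hat k<k)] + \E[\abs{\hat\theta-\tilde\theta_k}^r{\bf 1}(\hat k=k)] + \E[\abs{\hat\theta-\tilde\theta_k}^r{\bf 1}(\hat k>k)],
\]
the middle term vanishes because $\hat k=k$ forces $\hat\theta=\tilde\theta_{\hat k}=\tilde\theta_k$. It therefore remains to control the stopping-early and the stopping-late contributions separately, and to add them up.

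For the stopping-early term I would invoke Proposition \ref{PropEarlyrr} verbatim, which applies since $k\le k^\ast$ and yields the bound $(3^{r-1}\vee 1)(z_k^r+1+\alpha)s_k^r$. For the stopping-late term I would start from the pathwise estimate of Proposition \ref{PropLateErr},
\[
\abs{\hat\theta-\tilde\theta_k}{\bf 1}(\hat k>k) \le \max_{j=k+1,\ldots,K-1}\big(z_k s_{jk}+z_{j+1}s_{j+1}\big),
\]
whose right-hand side is deterministic. The key algebraic input is the hypothesis that $(z_k s_k)$ is non-increasing: since $j+1>k$ one has $z_{j+1}s_{j+1}\le z_k s_k$, so after splitting the maximum we get $\max_j(z_k s_{jk}+z_{j+1}s_{j+1})\le z_k\max_j s_{jk}+z_k s_k$. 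Raising to the power $r$ and using the convexity inequality $(a+b)^r\le 2^{r-1}(a^r+b^r)$, valid for $r\ge 1$, gives
\[
\E[\abs{\hat\theta-\tilde\theta_k}^r{\bf 1}(\hat k>k)] \le 2^{r-1}\Big(z_k^r s_k^r + z_k^r\max_{j=k+1,\ldots,K-1}s_{jk}^r\Big) \le (3^{r-1}\vee 1)\Big(z_k^r s_k^r + z_k^r\max_{j=k+1,\ldots,K-1}s_{jk}^r\Big).
\]

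Adding the two contributions and collecting the $s_k^r$ coefficients — $z_k^r+1+\alpha$ from the early part plus the extra $z_k^r$ from the late part — produces exactly $(2z_k^r+1+\alpha)s_k^r + z_k^r\max_{j}s_{jk}^r$ inside the factor $(3^{r-1}\vee 1)$, which is the claimed estimate.

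The main obstacle here is essentially bookkeeping rather than a genuine difficulty: the two propositions carry the analytic weight, and the only real manipulation is converting the pathwise late-stopping bound into a moment bound while using the monotonicity of $(z_k s_k)$ to absorb $z_{j+1}s_{j+1}$ into a $z_k s_k$ term. I would also check the degenerate case where the index range $k+1,\ldots,K-1$ is empty (in particular $k=K$), for which the late term is vacuously zero and the estimate holds a fortiori.
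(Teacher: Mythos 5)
Your proof is correct and follows essentially the same route as the paper's: split the risk into the early-stopping part, handled verbatim by Proposition \ref{PropEarlyrr}, and the late-stopping part, handled by the pathwise bound of Proposition \ref{PropLateErr} together with the monotonicity of $(z_ks_k)$ (which gives $z_{j+1}s_{j+1}\le z_ks_k$), then add the two contributions. The only cosmetic difference is that you split the maximum before raising to the $r$-th power while the paper applies the convexity inequality inside the maximum; the resulting bounds coincide.
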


\begin{proof}
For the late-stopping error Proposition \ref{PropLateErr} and the
decay of $(z_ks_k)$ give
\[\abs{\hat\theta-\tilde\theta_k}^r{\bf 1}(\hat k>k)\le  (2^{r-1}\vee 1)
\max_{j>k}(z_k^r s_{jk}^r+ z_{j+1}^r s_{j+1}^r)\le (2^{r-1}\vee 1)
z_k^r \big(s_k^r+\max_{j>k}s_{jk}^r\big).
\]
Add the early-stopping error from Proposition \ref{PropEarlyrr}.
\end{proof}

\begin{example}[continued]
For geometrically increasing bandwidths $(h_k)$ we obtain
$s_{jk}\lesssim s_k$ for $j>k$ and thus
\[\E[\abs{\hat\theta-\tilde\theta_{k^\ast}}^r]\lesssim
(\alpha+z_{k^\ast}^r)s_{k^\ast}^r.
\]
The factor $\alpha+z_{k^\ast}^r$ is the term we pay for
adaptation.
\end{example}

\section{Asymptotic risk}\label{SecAsymp}

\subsection{General result}

We shall derive convergence rates for $n\to\infty$ of the critical
values $(z_k)$. All quantities in the procedure may depend on $n$, but
we still write $U_k$, $K$ and $z_k$ instead of $U_k(n)$, $K(n)$, $z_k(n)$.
The notation $A\lesssim B$ will always mean  $A(n)\le cB(n)$ with some $c>0$ independent of $n$ and $A\thicksim B$ is short for $A\lesssim B$ and $B\lesssim A$.
We work under the following assumption whose validity under mild conditions will be derived
in the next subsection.

\begin{assumption}\label{AssAsymp}\mbox{}
\begin{enumerate}
\item The cardinalities $N_k$ of the neighbourhoods $U_k$ grow with geometric order:
\[ q_1N_k\le N_{k+1}\le
q_2N_k\quad\text{ for all $k=0,\ldots K-1$}
\]
for some fixed $q_2\ge q_1>1$ and with $N_1/\log(N_K)\to\infty$, $N_K\thicksim n$ as
$n\to\infty$.

\item For all sufficiently large $N$ we have
\[ \E[\abs{m(\eps_i,\,i=1,\ldots,N)}^{r}]^{1/r}\thicksim
\E[\abs{m(\eps_i,\,i=1,\ldots,N)}^{2r}]^{1/2r}\thicksim N^{-1/2}.
\]

\item For all $\tau_N\to\infty$ with $\tau_N N^{-1/2}\to 0$ a moderate deviations bound applies:
there is some $c>0$ such that
\[ \limsup_{N\to\infty}e^{c\tau_N^2}\PP\big(N^{1/2}\abs{m(\eps_i,\,i=1,\ldots,N)}>\tau_N\big)<\infty.
\]

\end{enumerate}
\end{assumption}

The following asymptotic bounds follow directly from the definitions:

\begin{lemma}\label{Lemskj}
Assumption \ref{AssAsymp}(b) implies $s_j\thicksim
N_j^{-1/2}$ and $N_j^{-1/2}\wedge(N_{k+1}-N_k)^{-1/2}\lesssim
s_{kj}\lesssim N_j^{-1/2}\vee(N_{k+1}-N_k)^{-1/2}$. Assumption
\ref{AssAsymp}(a) then yields for $k\ge j$
\[ s_j\thicksim s_{kj}\thicksim
N_j^{-1/2}.
\]
\end{lemma}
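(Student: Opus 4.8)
The plan is to reduce both quantities in \eqref{EqStochErr} to absolute moments of the location estimator over a \emph{fixed} number of i.i.d.\ noise variables, to which Assumption \ref{AssAsymp}(b) then applies verbatim. Under $\PP_0$ we have $Y_i=\eps_i$, so that $\tilde\theta_j=m(\eps_i,\,x_i\in U_j)$ depends only on the $N_j$ noise variables indexed by $U_j$; by exchangeability its law coincides with that of $m(\eps_i,\,i=1,\ldots,N_j)$, and Assumption \ref{AssAsymp}(b) gives immediately $s_j=\E_0[\abs{\tilde\theta_j}^r]^{1/r}\thicksim N_j^{-1/2}$. In the same way $\tilde\theta_{(k+1)\setminus k}$ has the law of $m(\eps_i,\,i=1,\ldots,N_{k+1}-N_k)$. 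For the range $j\le k$ relevant in the algorithm, $U_j\subset U_k$ is disjoint from $U_{k+1}\setminus U_k$, so that $\tilde\theta_j$ and $\tilde\theta_{(k+1)\setminus k}$ are \emph{independent}; I write $B:=\tilde\theta_j$ and $A:=\tilde\theta_{(k+1)\setminus k}$, with $\norm{A}_r\thicksim(N_{k+1}-N_k)^{-1/2}$ and $\norm{B}_r\thicksim N_j^{-1/2}$, where $\norm{\cdot}_r$ denotes the $L^r(\PP_0)$-norm.

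The upper bound is then just the triangle inequality,
\[ s_{kj}=\norm{A-B}_r\le\norm{A}_r+\norm{B}_r\thicksim (N_{k+1}-N_k)^{-1/2}\vee N_j^{-1/2}. \]
For the lower bound I would exploit independence through conditioning: for every fixed value $a$ of $A$,
\[ \E_0\big[\abs{a-B}^r\big]\ge\inf_{c\in\R}\E_0\big[\abs{B-c}^r\big], \]
and integrating over $A$ yields $s_{kj}^r\ge\inf_{c}\E_0[\abs{B-c}^r]$, i.e.\ $s_{kj}$ is bounded below by the centred $L^r$-fluctuation of $B$; conditioning on $B$ instead bounds it below by that of $A$. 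Thus $s_{kj}$ dominates the spread of each of the two estimators, which gives in particular the conservative bound $s_{kj}\gtrsim N_j^{-1/2}\wedge(N_{k+1}-N_k)^{-1/2}$ asserted in the lemma.

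The one genuinely non-formal point — and the main obstacle — is that this lower bound requires the centred $L^r$-fluctuation of the location estimator to be of the full order $N^{-1/2}$, i.e.\ an \emph{anti-concentration} property. This is not literally contained in the moment equivalence of Assumption \ref{AssAsymp}(b), which only controls the \emph{uncentred} moments: a nearly deterministic but $N^{-1/2}$-biased estimator would satisfy (b) yet fail the lower bound when the two sample sizes coincide. For the M-estimators at hand the property is however automatic from their genuine $\sqrt N$-scale variability (asymptotic normality with positive limit variance), whose validity is established in the next subsection, since then $\inf_c\E_0[\abs{B-c}^r]^{1/r}\thicksim N_j^{-1/2}$. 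Alternatively, whenever the two sample sizes are of different order — in particular for $k>j$ under Assumption \ref{AssAsymp}(a) — the reverse triangle inequality $s_{kj}\ge\big|\,\norm{A}_r-\norm{B}_r\,\big|$ already yields $s_{kj}\gtrsim N_j^{-1/2}$ using only Assumption \ref{AssAsymp}(b).

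Finally I would feed in Assumption \ref{AssAsymp}(a). For $k\ge j$ the geometric growth gives $N_{k+1}-N_k\thicksim N_k\ge N_j$, since $N_{k+1}-N_k\ge(q_1-1)N_k$ with $q_1>1$; hence $(N_{k+1}-N_k)^{-1/2}\lesssim N_j^{-1/2}$. The maximum in the upper bound therefore collapses to $N_j^{-1/2}$, while the lower bound via the fluctuation of $B=\tilde\theta_j$ is of the same order $N_j^{-1/2}$. Combining, $s_{kj}\thicksim N_j^{-1/2}\thicksim s_j$ for all $k\ge j$, as claimed.
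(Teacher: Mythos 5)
Your proposal is, in substance, the argument the paper leaves implicit: the paper gives no proof at all, introducing the lemma with the words that the bounds ``follow directly from the definitions'', and what it has in mind is exactly your reduction --- exchangeability under $\PP_0$, independence of $\tilde\theta_{(k+1)\setminus k}$ and $\tilde\theta_j$ for $j\le k$, and the triangle inequality for the upper bound. That part, and the claim $s_j\thicksim N_j^{-1/2}$, are correct. Moreover, your observation about the lower bound pinpoints a real imprecision in the paper rather than a defect of your own argument: Assumption \ref{AssAsymp}(b) constrains only \emph{uncentred} moments, and it alone cannot imply the stated lower bound. Indeed, a map $m$ with $m(\eps_1,\ldots,\eps_N)\equiv N^{-1/2}$ deterministic satisfies (b) (and even (c)), yet gives $s_{kj}=\abs{(N_{k+1}-N_k)^{-1/2}-N_j^{-1/2}}$, which vanishes when $N_{k+1}-N_k=N_j$. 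Some anti-concentration input on the location estimator is genuinely needed, and your appeal to its true $\sqrt{N}$-scale variability (available for the estimators the paper considers, e.g. via the normal coupling behind Proposition \ref{PropMoments}) is a legitimate way to close the gap. A fix staying closer to the paper's framework: under $\PP_0$ the estimators at hand are centred, $\E_0[\tilde\theta_j]=0$, or at least have median zero (as the sample median does when $\PP(\eps_i\le 0)=1/2$); conditioning on the independent component and applying Jensen's inequality (resp.\ a sign argument) then yields $s_{kj}\ge c\max\bigl(\norm{\tilde\theta_{(k+1)\setminus k}}_{L^r},\norm{\tilde\theta_j}_{L^r}\bigr)$, which is even stronger than the claimed minimum bound.

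One side remark in your write-up is, however, incorrect: it is not true that for $k>j$ Assumption \ref{AssAsymp}(a) makes the two sample sizes ``of different order'', so the reverse triangle inequality does not rescue the lower bound there. For fixed $k-j$ the ratio $(N_{k+1}-N_k)/N_j$ is only known to lie between $(q_1-1)q_1^{k-j}$ and $(q_2-1)q_2^{k-j}$, a bounded range; for instance $q_1=q_2=2$ and $k=j+1$ give $N_{k+1}-N_k=2N_j$, the same order. Since the constants hidden in the $\thicksim$ of Assumption \ref{AssAsymp}(b) are unspecified, $\norm{\tilde\theta_{(k+1)\setminus k}}_{L^r}$ and $\norm{\tilde\theta_j}_{L^r}$ may then coincide exactly, and $\abs{\,\norm{\tilde\theta_{(k+1)\setminus k}}_{L^r}-\norm{\tilde\theta_j}_{L^r}}=0$ yields nothing. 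So the anti-concentration (or centring-plus-Jensen) input is needed for every $k\ge j$, not only when the two cardinalities coincide. With that input in place, your concluding step is correct: under (a) one has $(N_{k+1}-N_k)^{-1/2}\le (q_1-1)^{-1/2}N_j^{-1/2}$, so upper and lower bounds both collapse to the order $N_j^{-1/2}$ and $s_{kj}\thicksim s_j\thicksim N_j^{-1/2}$ for all $k\ge j$.
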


Under Assumption \ref{AssAsymp} critical values of the same order as in the Gaussian case suffice.

\begin{proposition}\label{propasympzk}
Grant Assumption \ref{AssAsymp} and suppose $\alpha\in (0,1)$. We
can choose
\[z_k^2=\zeta\big(2r\log(s_k/s_K)+\log(\alpha^{-1})+\log(K)\big),\quad k=0,\ldots,K-1,
\]
with $\zeta>0$ a sufficiently large constant in order to satisfy Condition \eqref{eqhypzk}. For $K\thicksim \log
n$ this yields asymptotically $z_k\thicksim\sqrt{\log n}$.
\end{proposition}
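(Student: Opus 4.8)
The plan is to verify the per-index condition \eqref{eqhypzk} separately for each fixed $k$, working throughout under $\PP_0$. The first move is to drop the constraint $\{\forall\ell<k:\ \abs{\tilde\theta_{(j+1)\setminus j}-\tilde\theta_\ell}\le z_\ell s_{j\ell}\}$ inside the indicator: it only shrinks the event, so each summand is bounded by $\E_0[\abs{\tilde\theta_j}^r\,{\bf 1}(\abs{\tilde\theta_{(j+1)\setminus j}-\tilde\theta_k}>z_ks_{jk})]$. This is the decisive simplification, because it decouples the estimate from the previously selected values $z_0,\dots,z_{k-1}$ and is exactly what lets a closed-form prescription satisfy the sequential condition. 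I would then split this expectation by Cauchy--Schwarz into
\[
\E_0[\abs{\tilde\theta_j}^{2r}]^{1/2}\,\PP_0\big(\abs{\tilde\theta_{(j+1)\setminus j}-\tilde\theta_k}>z_ks_{jk}\big)^{1/2}.
\]

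For the moment factor I invoke Assumption \ref{AssAsymp}(b): since $\tilde\theta_j$ is built from $N_j$ i.i.d.\ errors under $\PP_0$, $\E_0[\abs{\tilde\theta_j}^{2r}]^{1/2}\thicksim N_j^{-r/2}\thicksim s_j^r$. For the probability factor I use the union bound $\PP_0(\abs{\tilde\theta_{(j+1)\setminus j}-\tilde\theta_k}>t)\le\PP_0(\abs{\tilde\theta_{(j+1)\setminus j}}>t/2)+\PP_0(\abs{\tilde\theta_k}>t/2)$ with $t=z_ks_{jk}$. By Lemma \ref{Lemskj} one has $s_{jk}\thicksim N_k^{-1/2}$ for $j\ge k$, while the fresh block $U_{j+1}\setminus U_j$ carries $N_{j+1}-N_j\thicksim N_j\ge N_k$ points by geometric growth. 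Hence both thresholds equal $z_k$ times the respective standard deviations up to a factor $\ge 1$, so Assumption \ref{AssAsymp}(c) applies with a moderate-deviation scale $\tau\gtrsim z_k$ and yields $\lesssim e^{-cz_k^2}$ for each piece, giving
\[
\E_0\big[\abs{\tilde\theta_j}^r\,{\bf 1}(\abs{\tilde\theta_{(j+1)\setminus j}-\tilde\theta_k}>z_ks_{jk})\big]\lesssim s_j^r\,e^{-cz_k^2/2}.
\]

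Summing over $j=k,\dots,K-1$ and using that $s_j^r\thicksim N_j^{-r/2}$ decays geometrically (ratio $q_1^{-r/2}<1$), the series collapses to its leading term, $\sum_{j\ge k}s_j^r\lesssim s_k^r$, so the left side of \eqref{eqhypzk} is $\lesssim e^{-cz_k^2/2}s_k^r$. Demanding this to be at most $\tfrac{\alpha}{K}s_K^r$ and taking logarithms produces the sufficient condition $\tfrac{c}{2}z_k^2\gtrsim r\log(s_k/s_K)+\log K+\log(\alpha^{-1})$, which the announced choice $z_k^2=\zeta(2r\log(s_k/s_K)+\log(\alpha^{-1})+\log K)$ meets for every $k$ as soon as $\zeta$ exceeds a constant depending only on $c$ and $r$; note that the $\log K$ term is precisely what pays for the $1/K$ budget. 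For the rate one has $\log(s_k/s_K)=\tfrac12\log(N_K/N_k)\le\tfrac12\log(N_K/N_1)\lesssim\log n$, while $\log K\thicksim\log\log n=o(\log n)$, so $z_k\lesssim\sqrt{\log n}$ uniformly, the order $\sqrt{\log n}$ being attained at small $k$.

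The hard part will be the constant bookkeeping needed so that a \emph{single} $\zeta$ works simultaneously for all $k=0,\dots,K-1$. This demands that the moderate-deviation bound of Assumption \ref{AssAsymp}(c) hold uniformly over the family of sample sizes $N_k$ (all tending to infinity since $N_k\ge N_1$) and over the scales $\tau\gtrsim z_k$, and that the geometric-sum collapse $\sum_{j\ge k}s_j^r\lesssim s_k^r$ carry a constant independent of $k$ (ensured by $q_1>1$). The most delicate verification is that $\tau\gtrsim z_k$ genuinely lies in the moderate-deviation window $\tau N_k^{-1/2}\to 0$ rather than the large-deviation regime: this reduces to $z_k^2\lesssim\log n$ together with $N_1/\log(N_K)\to\infty$, which is exactly the growth condition imposed in Assumption \ref{AssAsymp}(a).
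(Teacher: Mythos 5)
Your proposal is correct and follows essentially the same route as the paper's proof: drop the extra constraint in \eqref{eqhypzk}, apply Cauchy--Schwarz, bound the $2r$-th moment via Assumption \ref{AssAsymp}(b), control the probability by a union bound plus the moderate-deviation bound of Assumption \ref{AssAsymp}(c) (with the same check that $z_k s_{jk}\lesssim(\log(N_K)N_k^{-1})^{1/2}\to 0$ keeps you in the moderate-deviation window), and then choose $\zeta$ large enough. The only cosmetic difference is that you collapse $\sum_{j\ge k}s_j^r\lesssim s_k^r$ by geometric decay, whereas the paper uses the cruder bound $(K-k)s_k^r$ and absorbs the extra factor by taking $\exp(-c'z_k^2/2)=o\bigl(\alpha(s_K/s_k)^rK^{-2}\bigr)$; both yield the same prescription for $z_k$.
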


Note that the chosen critical values $z_k$ are decreasing in $k$, which has
the desirable effect that we do not permit stopping at an early
stage with the same probability as stopping at higher indices $k$.
Moreover, this guarantees that $z_ks_k$ is non-increasing in $k$,
the hypothesis in Theorem \ref{thmrisk}. From Theorem \ref{thmrisk} we therefore obtain the following
asymptotic risk bound.

\begin{corollary}\label{corasymprisk}
 Grant Assumptions \ref{AssMean} and \ref{AssAsymp} and let $K\thicksim \log n$. Choosing the critical values as in Proposition \ref{propasympzk}
gives
\[ \E[\abs{\hat\theta-\theta}^r]\lesssim (\log n)^{r/2}\E[\abs{\tilde\theta_{k^\ast}-\theta}^r].\]
\end{corollary}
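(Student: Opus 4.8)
The plan is to read the bound off Theorem~\ref{thmrisk} at the oracle index $k=k^\ast$ and then pay the adaptation price against the oracle risk, the only genuinely new ingredient being a stochastic lower bound on that oracle risk. First I would record the deterministic simplifications. By Lemma~\ref{Lemskj}, Assumption~\ref{AssAsymp}(a)--(b) gives $s_{jk^\ast}\thicksim N_{k^\ast}^{-1/2}\thicksim s_{k^\ast}$ for every $j>k^\ast$, so $\max_{j>k^\ast}s_{jk^\ast}^r\thicksim s_{k^\ast}^r$. Proposition~\ref{propasympzk} together with $s_k\thicksim N_k^{-1/2}$, $s_K\thicksim n^{-1/2}$ and $K\thicksim\log n$ yields $z_{k^\ast}\thicksim\sqrt{\log n}$, hence $1\le z_{k^\ast}^r\lesssim(\log n)^{r/2}$ for $n$ large. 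The two hypotheses of Theorem~\ref{thmrisk} are available: $(z_ks_k)$ is non-increasing by the remark after Proposition~\ref{propasympzk}, and Assumption~\ref{AssMean} holds e.g. by Lemma~\ref{LemMeanconvex}. Inserting all of this into Theorem~\ref{thmrisk} at $k=k^\ast$ collapses the right-hand side to
\[
\E\big[\abs{\hat\theta-\tilde\theta_{k^\ast}}^r\big]\lesssim z_{k^\ast}^r s_{k^\ast}^r\lesssim(\log n)^{r/2}s_{k^\ast}^r .
\]
A triangle inequality in $L^r$ then gives
\[
\E\big[\abs{\hat\theta-\theta}^r\big]\lesssim \E\big[\abs{\hat\theta-\tilde\theta_{k^\ast}}^r\big]+\E\big[\abs{\tilde\theta_{k^\ast}-\theta}^r\big]\lesssim(\log n)^{r/2}s_{k^\ast}^r+\E\big[\abs{\tilde\theta_{k^\ast}-\theta}^r\big].
\]

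Thus the whole statement reduces to the single inequality
\[
s_{k^\ast}^r\lesssim \E\big[\abs{\tilde\theta_{k^\ast}-\theta}^r\big],
\]
i.e. to showing that the oracle risk cannot drop below the stochastic floor $s_{k^\ast}^r$. Granting this, the first term is $\lesssim(\log n)^{r/2}\E[\abs{\tilde\theta_{k^\ast}-\theta}^r]$ and the second is trivially $\le(\log n)^{r/2}\E[\abs{\tilde\theta_{k^\ast}-\theta}^r]$ since $(\log n)^{r/2}\ge1$, which is exactly the claimed bound.

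For the floor I would argue by anti-concentration of the base estimator. Write $\tilde\theta_{k^\ast}=\bar\theta+W$, where $\bar\theta$ is the population location parameter over $U_{k^\ast}$ (so that $W$ is the purely stochastic fluctuation) and $b:=\bar\theta-\theta$ is a deterministic bias. The moment equivalence in Assumption~\ref{AssAsymp}(b), namely $\E[\abs W^{2r}]\lesssim(\E[\abs W^r])^2$ with $\E[\abs W^r]\thicksim s_{k^\ast}^r$, is a Paley--Zygmund condition and forces $\PP(\abs W\ge c\,s_{k^\ast})\ge p$ for fixed $c,p>0$. Combined with the asymptotic normality of the $M$-estimator underlying (b)--(c), it yields the \emph{two-sided} statement that both $\{W\ge c\,s_{k^\ast}\}$ and $\{W\le -c\,s_{k^\ast}\}$ have probability bounded away from $0$. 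For any real shift $b$ one of these two events reinforces the sign of $b$, whence $\E[\abs{W+b}^r]\ge(c\,s_{k^\ast})^r\,p'\gtrsim s_{k^\ast}^r$; the bias can never cancel the spread in $L^r$, no matter how large $\V_{k^\ast}(g)$ is.

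The step I expect to be the genuine obstacle is transferring Assumptions~\ref{AssAsymp}(b)--(c), which are stated for the pure-noise estimator $m(\eps_i,\,i=1,\dots,N)$ under $\PP_0$, to the fluctuation $W$ of the estimator computed on the signal-carrying sample, and in particular securing the two-sidedness of the anti-concentration rather than just an upper-tail bound. Here I would exploit the monotonicity and shift-equivariance of the $M$-estimator (Assumption~\ref{AssMean}/Lemma~\ref{LemMeanconvex}): sandwiching $\tilde\theta_{k^\ast}$ between $\min_ig(x_i)+m(\eps_i)$ and $\max_ig(x_i)+m(\eps_i)$ transfers the stochastic-error scale of the pure-noise estimator, while the moderate-deviation bound (c) localizes $W$ and the lower moment equivalence in (b) prevents it from degenerating below order $s_{k^\ast}$. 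The key point is that the deterministic bias $b$ plays no role in this lower bound once the two-sided spread of $W$ at scale $s_{k^\ast}$ is established, so carrying out the comparison of $W$ with the pure-noise estimator, and verifying the symmetric spread, is the only non-mechanical part of the proof.
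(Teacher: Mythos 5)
Your main line is exactly the paper's (implicit) proof: the corollary is obtained by reading Theorem \ref{thmrisk} at $k=k^\ast$, using Lemma \ref{Lemskj} to get $\max_{j>k^\ast}s_{jk^\ast}\thicksim s_{k^\ast}$, and Proposition \ref{propasympzk} to get $z_{k^\ast}\thicksim\sqrt{\log n}$ together with the monotonicity of $(z_ks_k)$ — precisely your first two displays. You are also right that the statement, as literally written, does not follow from these ingredients alone: one still needs the floor $s_{k^\ast}^r\lesssim\E[\abs{\tilde\theta_{k^\ast}-\theta}^r]$, i.e.\ that the oracle risk cannot fall below the pure-noise stochastic error level. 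The paper is silent on this point (it treats the excess-risk bound as immediately implying the corollary), so isolating the floor as the real remaining content is a genuine merit of your write-up.

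However, your proposed proof of the floor fails at exactly the step you flag as the obstacle, and the repair you sketch does not close it. First, Paley--Zygmund applied to the moment equivalence of Assumption \ref{AssAsymp}(b) yields only $\PP(\abs{W}\ge c\,s_{k^\ast})\ge p$, i.e.\ mass away from zero, not spread: the degenerate variable $W\equiv s_{k^\ast}$ satisfies the moment equivalence and this conclusion, yet the shift $b=-s_{k^\ast}$ annihilates $\E[\abs{W+b}^r]$. What is needed are two well-separated chunks of mass, and neither (b) nor the purely one-sided deviation bound (c) provides them; asymptotic normality, which would, is not among the paper's assumptions — and in any case (b) and (c) are hypotheses on the pure-noise estimator $m(\eps_i,\,i=1,\ldots,N)$, not on $W$. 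Second, and decisively, the sandwich $\min_i g(x_i)+m(\eps)\le\tilde\theta_{k^\ast}\le\max_i g(x_i)+m(\eps)$ localizes $\tilde\theta_{k^\ast}-\theta$ around $m(\eps)$ only to within $\V_{k^\ast}(g)$, and by the definition of $k^\ast$ this is bounded only by $z_{k^\ast}s_{k^\ast}\thicksim\sqrt{\log n}\,s_{k^\ast}\gg s_{k^\ast}$. A sandwich whose width is $\sqrt{\log n}$ times the scale you want to detect cannot establish spread at that scale; for instance, the constraint $\abs{\tilde\theta_{k^\ast}-\theta-m(\eps)}\le \V_{k^\ast}(g)$ is compatible with $\tilde\theta_{k^\ast}-\theta$ vanishing except on the moderate-deviation event $\{\abs{m(\eps)}>\V_{k^\ast}(g)\}$, whence a risk polynomially smaller than $s_{k^\ast}^r$. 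Closing the gap requires a genuinely new argument about the M-estimator on the non-identically distributed data $g(x_i)+\eps_i$ with perturbations as large as $z_{k^\ast}s_{k^\ast}$ — e.g.\ for the median, that the effective density $\frac1N\sum_i f_\eps(t-g(x_i)+\theta)$ near the population median stays comparable to $f_\eps(0)$ because $\V_{k^\ast}(g)\to0$ and $f_\eps$ is regular at $0$ — information contained neither in Assumption \ref{AssMean} nor in Assumption \ref{AssAsymp}, and hence an (unacknowledged) hidden regularity requirement of the paper as well.
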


\begin{example}[continued]
Let us specify to $s$-H\"older continuous $g:[0,1]\to\R$,
equidistant design and kernel estimators with geometrically
increasing bandwidths $h_k=h_0q^k$, $K\thicksim\log(n)$. Then we can choose
$z_k\thicksim \sqrt{\log(n)}$ and the index $k^\ast$ satisfies
$\V_{k^\ast}(f)^2\thicksim h_{k^\ast}^{2s}\thicksim
(nh_{k^\ast})^{-1}\log(n)$, that is $h_{k^\ast}\thicksim
(\log(n)/n)^{1/(2s+1)}$ and $z_{k^\ast}s_{k^\ast}\thicksim
(\log(n)/n)^{s/(2s+1)}$. This is the classical minimax rate for
pointwise adaptive estimation in the one-dimensional $s$-H\"older
continuous case, see \cit{Lepskietal} for the Gaussian case. Here, we have derived the same rate for pointwise adaptive $M$-estimation under very weak conditions on the error distribution, compare the discussion on specific models below.
Let us also mention that \cit{Truong} obtains the same rate result, but
without logarithmic factor, for the non-adaptive median regression
case.
\end{example}

\begin{proof}[Proof of Proposition \ref{propasympzk}]
Let $j\ge k$. For $n$ sufficiently large Assumption \ref{AssAsymp}(c) together with the asymptotics $z_ks_{jk}\lesssim (\log(N_K)N_k^{-1})^{1/2}\to 0$
(using Assumption \ref{AssAsymp}(a,b) and Lemma \ref{Lemskj}) yields
\begin{align*}
&\PP_0(\abs{\tilde\theta_{(j+1)\setminus j}-\tilde\theta_k}>z_k
s_{jk})\\
&\le \PP_0(\abs{\tilde\theta_{(j+1)\setminus j}}>z_k
s_{jk}/2)+\PP_0(\abs{\tilde\theta_k}>z_k
s_{jk}/2)\\
&\lesssim \exp(-c z_k^2s_{jk}^2(N_{j+1}-N_j)/4)+\exp(-c
z_k^2s_{jk}^2N_k/4).
\end{align*}
By Lemma \ref{Lemskj} there is another constant
$c'>0$ such that for large $z_k$
\[ \PP_0(\abs{\tilde\theta_{(j+1)\setminus j}-\tilde\theta_k}>z_k
s_{jk})\lesssim \exp(-c'z_k^2).
\]
Our choice of $z_k$ with $\zeta$ sufficiently large guarantees
$\exp(-c'z_k^2/2)=o(\alpha(s_K/s_k)^r K^{-2})$ for large
$K$. We therefore more than satisfy \eqref{eqhypzkgen} and the construction in \eqref{eqhypzk}
provided $n$ is sufficiently large:
\begin{align*}
\sum_{j=k}^{K-1} & \E_0\big[\abs{\tilde\theta_j}^r {\bf
1}(\abs{\tilde\theta_{(j+1)\setminus j}-\tilde\theta_k}>z_ks_{jk})\big]\\
&\le \sum_{j=k}^{K-1}\E_0\big[\abs{\tilde\theta_j}^{2r}]^{1/2} \PP_0(\abs{\tilde\theta_{(j+1)\setminus j}-\tilde\theta_k}>z_ks_{jk})^{1/2}\\
&\lesssim \sum_{j=k}^{K-1}s_j^r\exp(-c'z_k^2/2)\\
&=o\big( (K-k)s_k^r\alpha(s_K/s_k)^{r}K^{-2}\big)\\
&= o\Big(\frac{\alpha s_K^r}{K}\Big).
\end{align*}
For $K\thicksim \log N$ we obtain $\log(N_K/N_k)\le (K-k)\log q_2\lesssim \log N$ and thus $z_k^2\thicksim\log n$.
\end{proof}

\subsection{Specific models}

The preceding asymptotic analysis was based on Assumption \ref{AssAsymp} where part (a) can be ensured by construction whereas parts (b) and (c) depend on
the noise model and the choice of M-estimator. The most severe restriction will usually be the moderate deviation property of Assumption \ref{AssAsymp}(c). In the case where the law of the error variable $\eps_i$ is absolutely continuous, this property holds by Corollary 2.1 in \cit{Arcones} under the following conditions:
\begin{enumerate}
 \item $\E[\rho(\eps_i+h)-\rho(\eps_i)]=Vh^2+o(h^2)$ for some $V>0$ and $\abs{h}\to 0$;
\item $\rho$ is Lebesgue-almost everywhere differentiable with derivative $\rho'$;
\item there are $\lambda,\delta>0$ such that $\E[\exp(\lambda\abs{\rho'(\eps_i)})]$ and
$\E[\exp(\lambda\sup_{\abs{h}\le\delta}\abs{\rho(\eps_i+h)-\rho(\eps_i)-h\rho'(\eps_i)}/h)]$ are finite.
\end{enumerate}
For mean regression $\rho(x)=x^2$ we have $V=1$ and $\rho'(\eps_i)=2\eps_i$ such that a finite exponential moment for $\eps_i$ is required. For median regression the result applies with $V=f_\eps(0)/2$ and $\rho'(\eps_i)=\sgn(\eps_i)$ and because of $\abs{\abs{\eps_i+h}-\abs{\eps_i}-h\sgn(\eps_i)}\le 2h$ no moment bound is required. The same is true for any robust statistic with bounded influence function, in particular for the Huber estimator and general quantile estimators.
\cit{Arcones} discusses that an exponential tail estimate for $\rho'(\eps_i)$ is also necessary to obtain a moderate deviation bound, which might be a serious drawback when using Lepski's method with linear non-robust estimators.

For the median the requirements are not difficult to verify directly. Assumption \ref{AssAsymp}(b) is for example established by \cit{ChuHot}, who show that for $f_\eps$ continuously differentiable around zero, $f_\eps(0)>0$, $r\in\N$ and $Z\sim N(0,1)$:
\[ \lim_{N\to\infty}N^r\E[\med(\eps_1,\ldots,\eps_N)^{2r}]=(2f_\eps(0))^{-r}\E[Z^{2r}].
\]
Using a coupling result, we can establish Assumption \ref{AssAsymp}(b,c) under even more general conditions, see Section \ref{SecApp2} for a proof:

\begin{proposition}\label{PropMoments}
Assume that the $\eps_i$ have a Lebesgue density $f_\eps$ which is Lipschitz continuous at zero and satisfies
$\int_{-\infty}^0 f_\eps(x)\,dx=1/2$, $f_\eps(0)>0$, $\E[\abs{\eps_i}^r]<\infty$.
Noting $\med(\eps):=\med(\eps_1,\ldots,\eps_N)$, $N$ odd, we have
\[ \forall N\ge 5:\;\E[\abs{\med(\eps)}^r]\thicksim N^{-r/2}\text{ and }\E[\abs{\med(\eps)}^{2r}]\thicksim N^{-r}
\]
as well as for $\tau_N\to\infty$ with $\tau_N=o(N^{1/2})$
\[ \limsup_{N\to\infty}\PP\big(2N^{1/2}g_\eps(0)\abs{\med(\eps)}>\tau_N\big)\exp(\tau_N^2/8)\le 2.\]
\end{proposition}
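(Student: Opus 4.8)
The plan is to establish the moment bounds and the moderate deviation estimate for the sample median via an explicit coupling between the median and a suitable standardized statistic whose deviation behaviour is already understood, most naturally a (centered, scaled) empirical count or a Gaussian surrogate. The starting observation is the standard representation of the median in terms of the empirical distribution function: for $N=2\ell+1$ odd, $\med(\eps)>t$ holds exactly when fewer than $\ell+1$ of the $\eps_i$ exceed $t$, i.e. when the empirical count $S_N(t):=\#\{i:\eps_i\le t\}$ is at most $\ell$. Hence $\PP(\med(\eps)>t)=\PP(S_N(t)\le \ell)$, and $S_N(t)\sim\Bin(N,F_\eps(t))$ with $F_\eps(t)=\int_{-\infty}^t f_\eps$. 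Since $F_\eps(0)=1/2$ by assumption and $f_\eps$ is Lipschitz at zero, we have $F_\eps(t)=1/2+f_\eps(0)t+O(t^2)$ for small $t$, so that the event $\{\med(\eps)>t\}$ becomes a large-deviation event for a Binomial whose success probability is separated from $1/2$ by an amount $\thicksim f_\eps(0)t$.

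First I would prove the moment bounds. The upper bounds $\E[\abs{\med(\eps)}^r]\lesssim N^{-r/2}$ and $\E[\abs{\med(\eps)}^{2r}]\lesssim N^{-r}$ follow from a Bernstein/Hoeffding tail bound applied to $S_N(t)$: for $t$ in a fixed neighbourhood of zero one gets $\PP(\abs{\med(\eps)}>t)\lesssim \exp(-cNt^2)$ for some $c>0$ depending on $f_\eps(0)$, after which integrating $\PP(\abs{\med(\eps)}>t)$ against $t^{r-1}$ (and $t^{2r-1}$) over the near-zero regime gives the $N^{-r/2}$ (resp. $N^{-r}$) order. For the large-$t$ tail, where the Lipschitz expansion of $F_\eps$ is no longer in force, I would fall back on the crude bound $\PP(\abs{\med(\eps)}>t)\le \PP(S_N(t)\le\ell)$ together with the assumption $\E[\abs{\eps_i}^r]<\infty$, which controls the contribution of the tails to the moment integral. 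Matching lower bounds of the same order come from a one-sided deviation bound together with the anti-concentration guaranteed by $f_\eps(0)>0$; a clean way is to exhibit a fixed $t_0\thicksim N^{-1/2}$ with $\PP(\abs{\med(\eps)}>t_0)$ bounded below by a positive constant via the central limit theorem for $S_N(t_0)$.

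For the moderate deviation statement I would sharpen the Binomial tail. Writing $p_t:=F_\eps(t)$, the event $\{\med(\eps)>t\}=\{S_N(t)\le\ell\}$ is a lower-tail event for $\Bin(N,p_t)$ with mean $Np_t\approx N(1/2+f_\eps(0)t)$ and variance $\approx N/4$; putting $t=\tau_N/(2N^{1/2}f_\eps(0))$, the gap between $\ell\approx N/2$ and the mean $Np_t$ is $\thicksim \tau_N N^{1/2}\cdot\sqrt{\Var}$ in standardized units, and a Gaussian-type moderate deviation bound for the Binomial (valid precisely in the regime $\tau_N=o(N^{1/2})$) yields $\PP(S_N(t)\le\ell)\le (1+o(1))\exp(-\tau_N^2/8)$, the factor $1/8$ arising from $\frac12$ of the leading Gaussian rate $\exp(-\tau_N^2/2)$ split by the variance normalization. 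The Lipschitz condition at zero is exactly what is needed to replace $p_t$ by its first-order expansion uniformly over this moderate range without corrupting the exponent. The cleanest rigorous route, and the one I would follow, is a coupling: construct on a common space a Gaussian (or a Poisson/Binomial with the exact $f_\eps(0)$-calibrated parameter) so that the median's standardized deviation differs from a standard normal's by a negligible amount in the relevant range, and then read off the bound from the tail $\PP(Z>\tau_N)\le \exp(-\tau_N^2/2)$; this is presumably the ``coupling result'' the authors allude to.

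The main obstacle I expect is making the moderate deviation exponent sharp with the stated constant $1/8$ uniformly over all admissible $\tau_N$: this requires controlling the error in the Gaussian approximation to the Binomial tail throughout the moderate regime $\tau_N=o(N^{1/2})$, where neither the crude Bernstein bound (too lossy in the constant) nor a fixed Berry--Esseen estimate (too weak in the tail) suffices on its own. One must track how the $O(t^2)$ remainder in the expansion of $F_\eps$ interacts with the exponent $N t^2$, verifying that it contributes only an $o(\tau_N^2)$ correction under $\tau_N=o(N^{1/2})$; this is precisely where the Lipschitz-at-zero hypothesis does its work and where a careless estimate would spoil the constant. Everything else is a matter of organizing the near-zero versus tail regions and invoking the finite $r$-th moment to bound the latter.
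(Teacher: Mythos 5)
Your plan is a genuinely different route from the paper's. The paper never touches the binomial representation: it imports a quantile-coupling theorem of Brown, Cai and Zhou, realising the sample together with a standard normal $Z$ so that $\abs{\med(\eps)-Z/(2\sqrt{N}f_\eps(0))}\le C(1+Z^2)/N$ on $\{\abs{Z}\le\delta\sqrt{N}\}$, and then reads everything off Gaussian facts: the moment bounds on the coupling event directly, the complementary event via a uniform $L^{3r}$ bound on $\med(\eps)$ (obtained from the explicit density of the median) combined with H\"older's inequality and Gaussian tails, and the moderate deviation bound by absorbing the quadratic coupling error into a halved threshold. In particular, the constants in the statement are artifacts of exactly that step: the exponent $1/8$ comes from $\PP(2\abs{Z}>\tau_N)=\PP(\abs{Z}>\tau_N/2)\le\exp(-\tau_N^2/8)$, and the bound $2$ from adding $\PP(\abs{Z}>\delta\sqrt{N})$ --- not from ``half the Gaussian rate split by the variance normalization'' as you suggest. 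Your explanation of the constant is therefore wrong, but harmlessly so: in the regime $\tau_N=o(N^{1/2})$ plain Hoeffding applied to $S_N(t)$ with $t=\tau_N/(2N^{1/2}f_\eps(0))$ is \emph{not} lossy (contrary to your stated worry), since $p_t-1/2=f_\eps(0)t(1+O(t))$ gives $\exp(-2N(p_t-1/2)^2)=\exp(-(1+o(1))\tau_N^2/2)$, and the Lipschitz remainder only perturbs the exponent by a factor $1+O(\tau_N/\sqrt{N})=1+o(1)$. So your route proves a strictly stronger deviation bound, from which the stated one follows a fortiori; there is no need for the Gaussian/Poisson coupling you fall back on at the end --- note that this coupling \emph{is} the Brown--Cai--Zhou theorem the paper cites, and constructing it from scratch is a substantial result, not a routine step.

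There is, however, one genuine gap: the tail part of the moment bounds. As written, ``the crude bound $\PP(\abs{\med(\eps)}>t)\le\PP(S_N(t)\le\ell)$ together with $\E[\abs{\eps_i}^r]<\infty$'' does not go through, because the natural union bound is $\PP(\med(\eps)>t)\le\binom{N}{m+1}(1-F_\eps(t))^{m+1}$ with $N=2m+1$, and the central binomial coefficient is of order $4^m/\sqrt{m}$; the product is only small when $1-F_\eps(t)$ is below $1/4$, not merely below $1/2$. A correct version needs three regions: $[0,t_0]$ where the Lipschitz expansion holds (Hoeffding, as you do); a fixed intermediate region $[t_0,t_1]$ where one only knows $F_\eps(t)-1/2\ge F_\eps(t_0)-1/2>0$ (Hoeffding again, contributing $O(t_1^{2r}e^{-cN})$ to the moment integral, which is negligible); and $[t_1,\infty)$ with $t_1$ chosen so that $1-F_\eps(t_1)<1/4$, where one writes $(1-F_\eps(t))^{m+1}\le(1-F_\eps(t_1))^{m-2}(1-F_\eps(t))^3$ so that the geometric factor kills $\binom{N}{m+1}$, and the remaining three powers together with the Markov bound $1-F_\eps(t)\le Ct^{-r}$ make $\int t^{2r-1}\PP(\abs{\med(\eps)}>t)\,dt$ converge. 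This is also precisely where the hypothesis $N\ge5$ (i.e.\ $m\ge2$, at least three exceedances available beyond the geometric factor) enters --- a point your sketch never identifies. The paper faces the same difficulty and resolves it through the density formula $f_m(z)=\binom{2m+1}{m+1}(m+1)f_\eps(z)[F_\eps(z)(1-F_\eps(z))]^m$, where $[F_\eps(1-F_\eps)]^m\le\beta^m$ with $\beta<1/4$ away from a neighbourhood of zero plays exactly this role, yielding a uniform $L^{3r}$ bound and then H\"older. With that repair (and a word on uniformity of the CLT-based lower bound over all $N\ge5$, which is trivial for each fixed $N$), your argument is complete.
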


\section{Simulation results}\label{SecSimul}

\begin{figure}[t]
\includegraphics[width=6.5cm]{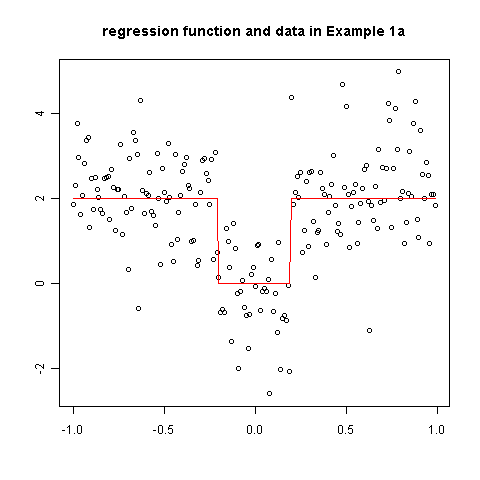}
\includegraphics[width=6.5cm]{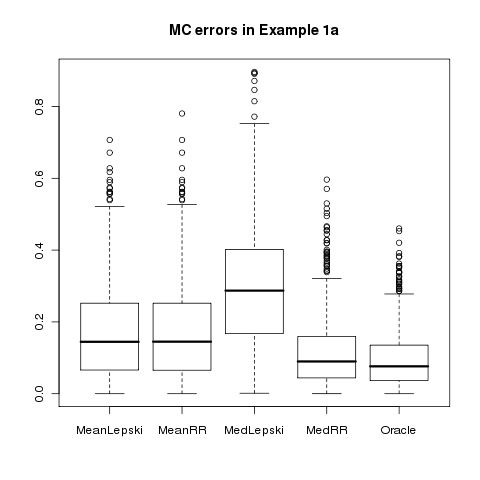}
\caption{Example 1 with Laplace noise: A typical realisation and a box plot of the sample errors in 1000
Monte Carlo runs.}
\end{figure}
\begin{figure}[t]
\end{figure}

We illustrate our procedure by an implementation for median regression on
${\cal X}=[-1,1]$ and the estimation of the regression function at $x=0$. We
simulate $n=200$ equidistant observations $(Y_i)$ with standardized errors
$(\eps_i)$ ($\E[\eps_i]=0$, $\Var(\eps_i)=1$) that are (a) Laplace, (b) normal
and (c) Student t-distributed with three degrees of freedom. The location is
each time estimated by local sample means as well as by local sample medians.
As neighbourhoods we take symmetric intervals $U_k$ around zero containing
$\floor{5^k/4^{k-1}}$ data points. This gives $K=17$ different base estimators.

The calibration of the procedure is performed for Laplace distributed errors
with $r=2$ and $\alpha=1$. The variances $s_j$, $s_{jk}$ of the sample means
are calculated exactly and those of the sample medians are approximated by
their asymptotic values (which are quite close to Monte Carlo values). The
critical values $(z_k)$ are chosen according to the prescription in
\eqref{eqhypzkgen}. This is achieved in both cases, mean and median estimators,
by using the choice in Proposition \ref{propasympzk} with values $\zeta$ that
are calibrated by 10000 Monte Carlo runs for the pure noise situation. It
turned out that this gives almost equally sized error contributions for the
different values $z_k$, as postulated in \eqref{eqhypzk}. The same calibration
principle was applied for the original Lepski procedure with mean and median
estimators.

\begin{figure}[t]
\includegraphics[width=6.5cm]{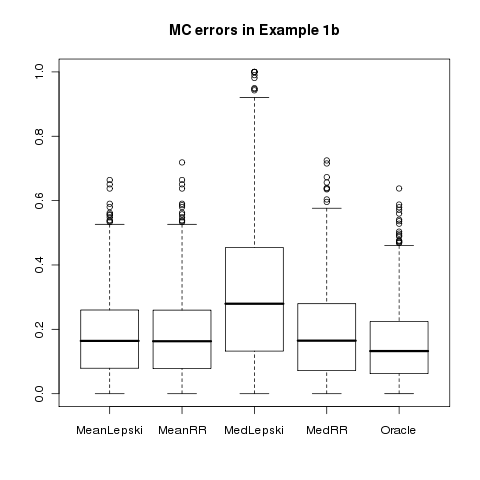}
\includegraphics[width=6.5cm]{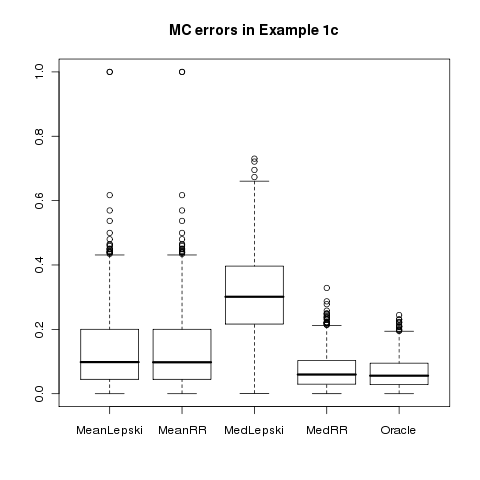}
\caption{Box plot of the sample errors in 1000
Monte Carlo runs for Gaussian (left) and Student t(3) noise (right).}
\end{figure}
\begin{figure}[t]
\end{figure}

As a first example we take a simple change point problem by considering the
regression function $g(x)=0$ for $\abs{x}\le 0.2$ and $g(x)=2$ for
$\abs{x}>0.2$, which can be considered as a toy model for edge detection in
image restauration or for structural breaks in econometrics. In Figure 1 we
show a typical data set in the Laplace case (a) together with box plots for the
absolute error of the different methods in 1000 Monte Carlo repetitions: local
means with Lepski's and with our method, local medians with Lepski's and with
our method and the oracle method, which is just the sample median over
$[-0.2,0.2]=\{x:\,g(x)=0\}$. For exactly the same methods, especially still
calibrated to Laplace errors, Figure 2 presents the results for Gaussian and
heavy-tailed Student t(3) errors.

It is obvious that in all cases Lepski's method applied to sample medians as
base estimators works quite badly. This is due to the fact that this method
stops far too late: the sample median over the complete intervals $U_k$ does
not really 'notice' the jump in the data. In fact, in the Laplace simulation
study the oracle $k=10$ is selected by this method in less than $1\%$ of the
cases while most often ($65\%$) the selection is $k=12$ which yields the $1.5$
times larger window $U_{12}=[-0.29,0.29]$. The methods using the sample mean
estimators perform reasonably well and especially both very similarly. Still,
they are clearly beaten by our median based procedure in cases (a) and (c)
where the median is the more efficient location estimator. It is remarkable
here that we nearly achieve the risk of the oracle median estimator. Even in
the Gaussian case (b) the linear procedures have only minor advantages.
Finally, we notice the robustness property that the calibration with the wrong
error distribution in Figure 2 does not seriously affect the results.

\begin{figure}[t]
\includegraphics[width=6.5cm]{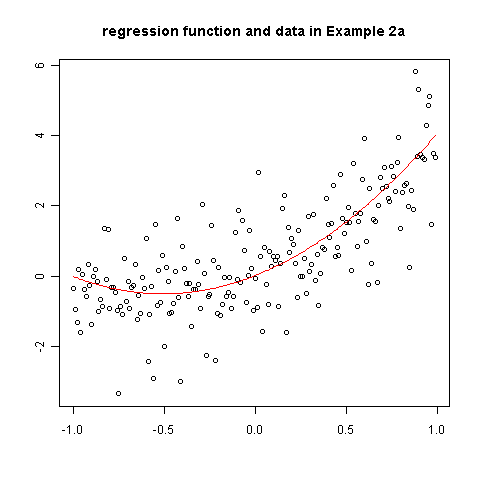}
\includegraphics[width=6.5cm]{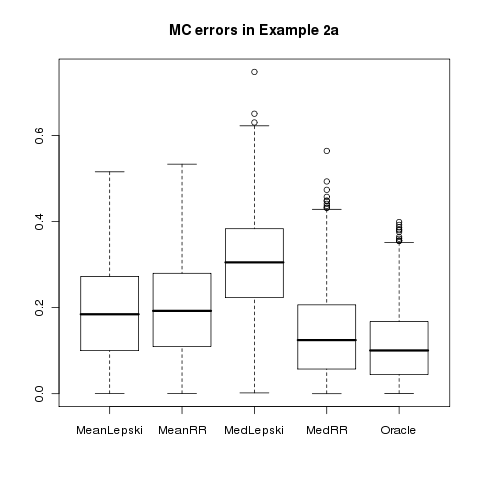}
\caption{Example 2 with Laplace noise. A typical realisation and a box plot of the sample errors in 1000
Monte Carlo runs.}
\end{figure}
\begin{figure}[t]
\end{figure}

In a second example we consider the smooth regression function $g(x)=2x(x+1)$.
Because we are estimating locally around $x=0$, this is a caricature of a
$C^2$-function with $g'(0)=2$ and $g''(0)=4$. Figure 3 shows again a typical
data set and boxplots for the different methods in 1000 Monte Carlo runs under
Laplace errors. This time the oracle choice is the window $[-0.39,0.39]$. Our
median based procedure outperforms the others where the advantage over the
mean-based approaches is again mainly due to the relative efficiency gain of
size $1/\sqrt{2}$ induced by the base estimators in the Laplace model. This
gain, though, is not at all visible when using Lepski's method for selecting
among the sample medians. The results for the error distributions (b) and (c)
resemble those of the first example, we confine ourselves to summarizing the
numerical results for all examples in the following table, each time stating
the Monte Carlo median of the absolute error:

{\centering

\begin{tabular}{|l|r|r|r|r|r|r|}\hline
Ex. & Mean Lepski & Mean RR & Median Lepski & Median RR & Median Oracle\\\hline\hline
1a  & 0.1446 & 0.1450 & 0.2871 & 0.0897 & 0.0763\\\hline
1b  & 0.1640 & 0.1630 & 0.2795 & 0.1647 & 0.1325\\\hline
1c  & 0.0982 & 0.0978 & 0.3012 & 0.0596 & 0.0560\\\hline
2a  & 0.1846 & 0.1924 & 0.3051 & 0.1246 & 0.1005\\\hline
2b  & 0.1808 & 0.1886 & 0.3430 & 0.1586 & 0.1241\\\hline
2c  & 0.2102 & 0.2126 & 0.2455 & 0.1047 & 0.0822\\\hline
\end{tabular}\\
}

\bigskip

Further simulation experiments confirm this picture. Especially for lower values of the moment $r$ our median-based procedure is very efficient, while sometimes for $r=2$ the mean-based procedures profit from less severe outliers in the Monte Carlo runs. In all these experiments the location is equally described by mean and median and we mainly see the efficiency gain of the sample median for non-Gaussian noise. For general quantile regression, however, linear methods do not apply and the standard Lepski procedures based on the nonlinear base estimators will perform badly. Our approach gives significantly better results. The error reductions by a factor of two and more, achieved in the median procedures above, confirm this very clearly.

\section{Application}\label{SecImage}

\begin{figure}[t]
\includegraphics[width=6.5cm]{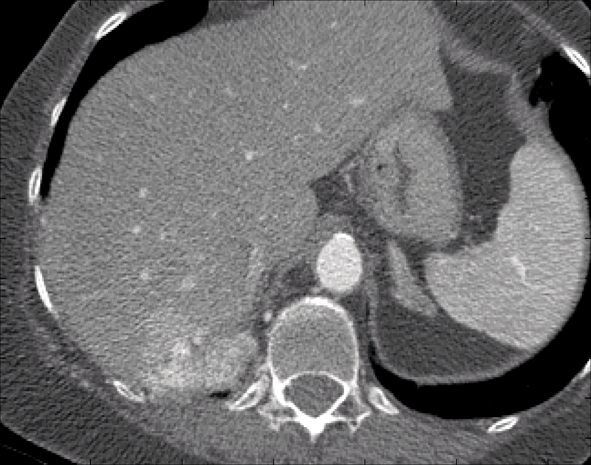}
\includegraphics[width=6.5cm]{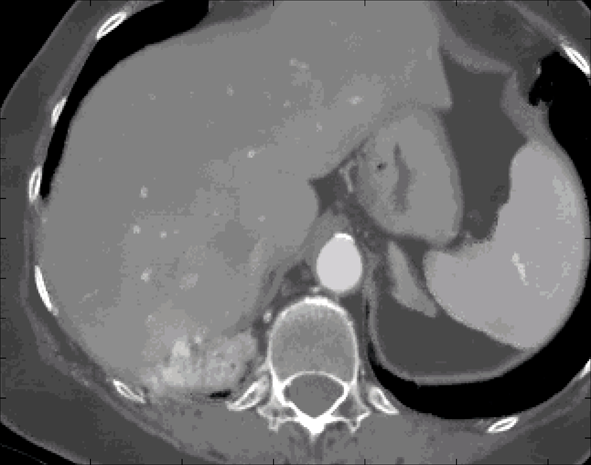}
\caption{CT scan of the upper abdomen, original and result of denoising}
\end{figure}

The proposed procedure is applied to denoise images used in the surveillance of
cancer therapies. In Dynamic Contrast Enhanced Computer Tomography (DCE-CT) a
contrast agent is injected in the human body and its diffusion over time is
observed which is specific for different kinds of cell tissues and allows thus
the surveillance of cancer therapies. For medical reasons the dose of contrast
agent is kept small which leads to a poor signal-to-noise ratio. An analysis of
residuals shows that the observational noise is well modeled by the Laplace
distribution. Moreover, sometimes human movements produce significant outliers.
Therefore local median estimation is employed. Especially for dynamical image
sequences, the denoising is remarkably successful when the same spatial
neighbourhoods are used over the whole observation period. This means that at
each voxel location $x_i$ a vector-valued intensity function $g:{\cal
X}\to\R^K$ is observed under vector-valued noise $\eps_i$. The vector $g(x_i)$
encodes the intensity at time points $(t_1,\dots,t_K)$ recorded at spatial
location $x_i$. Our previously developed procedure perfectly applies to this
situation, we just need a testing procedure between vector-valued local
M-estimators.

Details of the experimental setup and the estimation procedure are discussed in
\cit{RRC} and we merely give a rough description of the setting. A
multiresolution test procedure is applied to compare different vector
estimates. In a first pre-selection step for each voxel $x_i$ we disregard
voxels that are significantly different from $x_i$ and construct then circular
neighbourhoods around $x_i$ consisting only of non-rejected voxels. This allows
geometrically richer neighborhood structures that in practice adapt well to the
structure. Mathematically, the analysis of the algorithm remains the same when
conditioning on the result of this first pre-selection.

\begin{figure}[t]
\includegraphics[width=6.5cm]{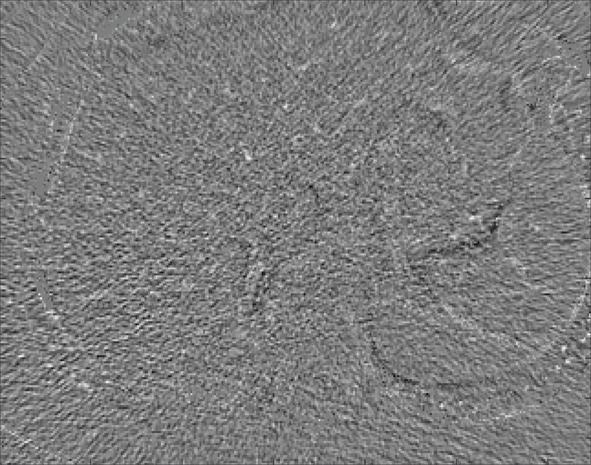}
\includegraphics[width=6.5cm]{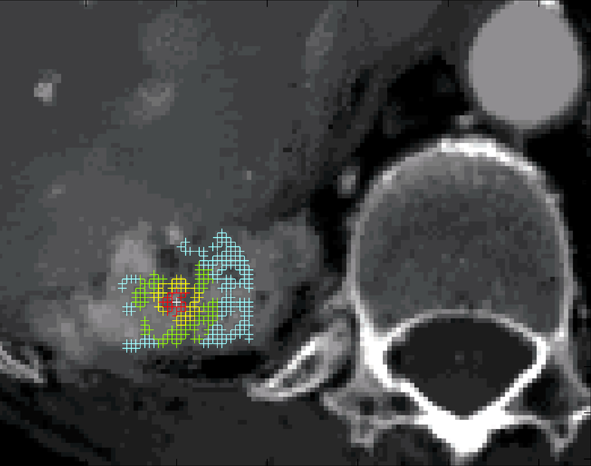}
\caption{CT scan of the upper abdomen, residuals and zoom in denoised image with neighbourhood constructions around
one voxel}
\end{figure}

For the present example we dispose of a DCE-CT sequence of $K=53$ recordings of
$512\times512$-pixel images in the upper abdomen of a cancer patient. In Figure
4 the original image at time step 23 is depicted together with the result of
our denoising procedure. The noise reduction is remarkable while fine
structures like edges are well preserved and not smoothed out. The residuals in
Figure 5(left) show some artefacts due to human body movements and CT radial
artefacts, which our procedure removed as well. In Figure 5(right) a zoom into
Figure 4(right) is shown together with the sequence of neighbourhoods
constructed for one voxel inside the cancerogeneous tissue. The effect of the
pre-selection step is clearly visible by the geometrically adaptive form of the
neighbourhoods. Further results, in particular the denoised dynamics in certain
voxels and an application to automatic clustering of cell tissues are reported
in \cit{RRC}. The generality of our procedure has the potential to provide
statistical solutions in many further applications where spatial inhomogeneity
and robustness are key issues.

\section{Appendix}\label{SecApp}

\subsection{Proof of Proposition \ref{PropTest}}\label{SecApp1}

The asymptotic normality of the sample median $\sqrt{n}\med(Y_1,\ldots,Y_n)\Rightarrow N(0,1/(4f^2(0)))$ is well known
\cite[Corollary 21.5]{vanderVaart} and implies by independence the first asymptotic result.

Since the sample medians in the second case are not independent, we consider their joint distribution using
empirical processes. Let us write $F_\Delta$ for the cumulative distribution function of $f(\cdot-\Delta)$ and denote by $B^1,B^2$ two independent standard Brownian bridges. Then empirical process theory yields by independence
\[\sqrt{n}\Big(\frac1n\sum_{i=1}^n{\bf 1}([Y_i,\infty))-F,\frac1n\sum_{i=n+1}^{2n}{\bf 1}([Y_i,\infty))-F_\Delta\Big)
\Rightarrow (B^1\circ F,B^2\circ F_\Delta).4g
\]
The joint median $\med(Y_i,i=1,\ldots,2n)$ satisfies in terms of the empirical distribution functions
$F^n$ and $F^n_\Delta$ of the two samples
\[ F^n(\med(Y_i,i=1,\ldots,2n))+F_\Delta^n(\med(Y_i,i=1,\ldots,2n))=1.\]
Hence, it can be expressed as the functional $(F^n+F_\Delta^n)^{-1}(1)$ of $(F^n,F_\Delta^n)$, assuming that the
inverse is defined properly (e.g. giving the mean of all admissible values).
Combining two-dimensional versions of Theorem 20.8 and Lemma 21.4 of \cit{vanderVaart}, we infer
\begin{align*}
&\sqrt{n}\Big(\med(Y_i,\,i=1,\ldots,n),\med(Y_i,\,i=1,\ldots,2n)-\Delta/2\Big)\\
&\quad\Rightarrow
 \Big(-(B^1\circ F/f)\circ F^{-1}(1/2), -(B^1\circ F+B^2\circ F_\Delta)/(f+f(\cdot-\Delta))\circ (F+F_\Delta)^{-1}(1)\Big).
\end{align*}
By symmetry of $f$ the right-hand side simplifies to
\[\Big(-B^1(1/2)/f(0), -\big(B^1(F(\Delta/2))+B^2(F_\Delta(\Delta/2))\big)/(2f(\Delta/2))\Big).
\]
Consequently, $\sqrt{n}(2(\med(Y_i,\,i=1,\ldots,2n)-\med(Y_i,\,i=1,\ldots,n))-\Delta)$
is asymptotically normal with mean zero and variance
\begin{align*} \sigma_L^2&=4\E\Big[\Big(-(B^1(F(\Delta/2))+B^2(F(-\Delta/2)))/(2f(\Delta/2))+B^1(1/2)/f(0)\Big)^2\Big]\\
 &= 4\Big(\frac{F(-\Delta/2)(1-F(-\Delta/2))}{4f^2(\Delta/2)}+\frac{F(\Delta/2)(1-F(\Delta/2))}{4f^2(\Delta/2)}
+\frac{1}{4f^2(0)}-\frac{1-F(\Delta/2)}{2f(0)f(\Delta/2)}\Big)\\
&= \frac{2F(\Delta/2)(1-F(\Delta/2))}{f^2(\Delta/2)}+\frac{1}{f^2(0)}-\frac{2(1-F(\Delta/2))}{f(0)f(\Delta/2)}.
\end{align*}
While $\sigma_L^2=\sigma_W^2$ for $\Delta=0$ is straight-forward, we rewrite $\sigma_L^2$ in terms of $R=F/f$ to study the behaviour as $\Delta\to 0$:
\[ \sigma_L^2=2R(\Delta/2)R(-\Delta/2)+4R^2(0)-4R(0)R(-\Delta/2).\]
Because of $R(\Delta/2)-R(-\Delta/2)=\Delta+O(\Delta^2)$ by the Lipschitz property of $f$, we obtain asymptotically
\[ \sigma_L^2=\sigma_W^2+2\Big(\big(R(-\Delta/2)-R(0)\big)^2+
R(-\Delta/2)\big(R(\Delta/2)-R(-\Delta/2)\big)\Big)=\sigma_W^2+\Delta+O(\Delta^2).
\]
This gives $\sigma_L^2=\sigma_W^2(1+2\Delta f(0)+O(\Delta^2f(0)))$.\hfill\qed\\

\subsection{Proof of Proposition \ref{PropMoments}}\label{SecApp2}
 We shall only consider the case of odd $N=2m+1$. Under the conditions of the
proposition \cit{Brownetal} show the following result.

\begin{theorem}\label{ThmCoupling}
For all $m\ge 0$ the sample
$\eps_1,\ldots,\eps_{2m+1}$ can be realised on the same probability
space as a standard normal random variable $Z$ such that
$\med(\eps):=\med(\eps_i,\,i=1,\ldots,2m+1)$ satisfies
\[ \babs{\med(\eps)-\frac{Z}{\sqrt{4(2m+1)}f_\eps(0)}}\le
\frac{C}{2m+1}\Big(1+Z^2\Big) \text{ if }
\abs{Z}\le\delta\sqrt{2m+1},
\]
where $\delta,C>0$ are constants depending on $f_\eps$, but
independent of $m$.
\end{theorem}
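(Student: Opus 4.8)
The plan is to reduce the median of a general sample to the median of a uniform sample, whose law is explicit, and then to transport a single Beta--normal coupling back through the quantile transform.

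First I would pass to uniform order statistics. Writing $F$ for the common distribution function and $Q:=F^{-1}$ for its (generalised) inverse, the vector $(\eps_1,\dots,\eps_{2m+1})$ has the same law as $(Q(U_1),\dots,Q(U_{2m+1}))$ with $U_i$ i.i.d.\ uniform on $(0,1)$; since $Q$ is non-decreasing and the sample size is odd, the median commutes with $Q$, so $\med(\eps)=Q(V)$ with $V:=U_{(m+1)}$ the median of the uniforms. It is classical that $V\sim\operatorname{Beta}(m+1,m+1)$, a law symmetric about $\tfrac12$ with variance $\tfrac1{4(2m+3)}$. The whole coupling then reduces to a one-dimensional coupling of $V$ with a normal $Z$.

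The technical heart is this Beta--normal quantile coupling, and I expect it to be the main obstacle. I would construct $Z\sim N(0,1)$ by the monotone coupling $Z:=\Phi^{-1}(G_m(V))$, where $G_m$ and $\Phi$ are the $\operatorname{Beta}(m+1,m+1)$ and standard normal distribution functions, and then estimate the error in $V-\tfrac12\approx Z/\sqrt{4(2m+1)}$. In the variable $w=v-\tfrac12$ the explicit density factorises as
\[ g_m(\tfrac12+w)=c_m\,(1-4w^2)^m=c_m\exp\big(m\log(1-4w^2)\big)=c_m\exp\big(-4mw^2-8mw^4-\cdots\big),\]
so by Stirling for $c_m$ it is a normal density of variance $\tfrac1{8m}$ perturbed only at order $mw^4$ in the exponent; crucially the symmetry $g_m(\tfrac12+w)=g_m(\tfrac12-w)$ removes any cubic term. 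The coupling obeys the identity $dw/dZ=\phi(Z)/h_m(w)$ with $h_m$ the density of $w$; inserting the expansion and integrating this ordinary differential equation from the common centre $w=Z=0$ yields, for $\abs{Z}\le\delta\sqrt{2m+1}$ with $\delta$ small,
\[ \babs{V-\tfrac12-\frac{Z}{\sqrt{4(2m+1)}}}\le\frac{C'}{2m+1}\,(1+Z^2).\]
The mismatch between the exact scale $\tfrac1{\sqrt{4(2m+3)}}$ and the stated $\tfrac1{\sqrt{4(2m+1)}}$ is $O(m^{-3/2})$ and is absorbed on this range of $Z$. Equivalently one could obtain this from a Cornish--Fisher expansion of both quantile functions, or from a Koml\'os--Major--Tusn\'ady coupling for the binomial tail via $\{V\le t\}=\{\Bin(2m+1,t)\ge m+1\}$; I would prefer the self-contained density-ratio argument above, whose uniformity in $m$ is the delicate point.

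Finally I would transport the bound through $Q$, which is where the Lipschitz hypothesis on $f_\eps$ enters. Since $F(0)=\tfrac12$, $f_\eps(0)>0$ and $f_\eps$ is Lipschitz at $0$, we have $f_\eps\ge f_\eps(0)/2>0$ on a neighbourhood of $0$, so $Q$ is Lipschitz near $\tfrac12$ with $Q(\tfrac12)=0$ and $Q'(p)=1/f_\eps(Q(p))$ for a.e.\ $p$, in particular $Q'(\tfrac12)=1/f_\eps(0)$. Writing the remainder as an integral and using $\abs{1/f_\eps(Q(p))-1/f_\eps(0)}\lesssim\abs{Q(p)}\lesssim\abs{p-\tfrac12}$ gives
\[ \babs{Q(\tfrac12+w)-\frac{w}{f_\eps(0)}}\le \tfrac{L'}{2}\,w^2.\]
On $\{\abs{Z}\le\delta\sqrt{2m+1}\}$, shrinking $\delta$ so that the relevant $w$ stay in the neighbourhood, the previous step gives $w^2\lesssim(1+Z^2)/(2m+1)$, so both the quadratic remainder and the transported coupling error are of order $(1+Z^2)/(2m+1)$; adding them proves
\[ \babs{\med(\eps)-\frac{Z}{\sqrt{4(2m+1)}\,f_\eps(0)}}\le\frac{C}{2m+1}\,(1+Z^2)\qquad\text{on }\abs{Z}\le\delta\sqrt{2m+1},\]
with $C,\delta$ depending only on $f_\eps$. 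It is in fact this $F^{-1}$-remainder that pins down the $Z^2/(2m+1)$ term, the coupling error from the second step being of the smaller order $m^{-3/2}$.
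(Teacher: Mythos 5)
First, a point of comparison: the paper does not prove Theorem \ref{ThmCoupling} at all --- it is quoted as a known result of \cit{Brownetal}, and the appendix only uses it as a black box inside the proof of Proposition \ref{PropMoments}. So your proposal attempts strictly more than the text does, and your route --- reduction to the uniform median $V\sim\operatorname{Beta}(m+1,m+1)$ via the quantile transform, a Beta--normal quantile coupling, then transport through $Q=F^{-1}$ --- is the same circle of ideas as in that cited source. Two of your three steps are complete and correct: the reduction $\med(\eps)=Q(V)$ (order statistics commute with nondecreasing maps, and the sample size is odd), and the transport step, where Lipschitz continuity of $f_\eps$ at $0$ together with $f_\eps(0)>0$ gives $f_\eps\ge f_\eps(0)/2$ near $0$, hence $\babs{Q(\tfrac12+w)-w/f_\eps(0)}\lesssim w^2$; with $w^2\lesssim(1+Z^2)/(2m+1)$ this correctly produces the dominant $(1+Z^2)/(2m+1)$ term. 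Your closing remark that the Beta--normal coupling error is of the smaller order $(\abs{Z}+\abs{Z}^3)(2m+1)^{-3/2}$ is also right.

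The genuine gap is in your step 2, which is the entire analytic content of the theorem: the claim that ``integrating the ODE $dw/dZ=\phi(Z)/h_m(w)$ from $w=Z=0$ yields'' the bound $\babs{V-\tfrac12-Z/\sqrt{4(2m+1)}}\le C'(1+Z^2)/(2m+1)$ uniformly in $m$ on $\abs{Z}\le\delta\sqrt{2m+1}$ is asserted, not proved, and it is not a routine perturbation. At the edge of that range one has $w\asymp\delta$, so the quantities you treat as small corrections are not pointwise small: $8mw^4\asymp m\delta^4$ diverges with $m$, and $4mw^2$ and $Z^2/2$ are each of order $m\delta^2$; the stated bound only emerges because these large terms nearly cancel \emph{along the solution trajectory}, which requires a bootstrap/Gronwall argument exploiting the self-correcting structure of the quantile transform (equivalently, an Edgeworth/Cornish--Fisher expansion of $G_m^{-1}\circ\Phi$ with a remainder controlled uniformly in $m$). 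That argument is precisely the content of the median-coupling lemma of \cit{Brownetal} and of KMT-type quantile inequalities, and it is where all the length of such proofs lies. As written, your argument correctly reduces the theorem to such a lemma (you flag the uniformity in $m$ as delicate, rightly) but does not establish it; either carry out that step in detail, or cite the coupling lemma --- in which case your proof collapses to the paper's one-line attribution.
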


The construction and the inequality of the theorem yield with some
constant $C'>0$
\begin{align*}
\E[&\abs{\med(\eps)}^{2r}{\bf 1}(\abs{Z}\le\delta\sqrt{2m+1})]\\
&\le (2^{r-1}\vee
1)\E\Big[\big(4(2m+1)f_\eps(0)^2\big)^{-r}\abs{Z}^{2r}+
\tfrac{C^{2r}}{(2m+1)^{2r}}\Big(1+Z^2\Big)^{2r}\Big]\\
&\le C'(2m+1)^{-r}.
\end{align*}
On the other hand, because of $\eps_i\in L^r$ we have for $z\to\infty$ that
the cdf satisfies $F_\eps(-z)\lesssim \abs{z}^{-r}$ and $1-F_\eps(z)\lesssim
\abs{z}^{-r}$. From the formula for the density of $\med(\eps)$
\[ f_m(z)=
\binom{2m+1}{m+1}(m+1)f_\eps(z)F_\eps(z)^{m}(1-F_\eps(z))^{m}
\]
we therefore infer that
$\norm{\med(\eps)}_{L^{3r}}$ for $m\ge 2$ is finite and uniformly bounded.
Hence, the H\"older inequality gives
\[ \E[\abs{\med(\eps)}^{2r}{\bf 1}(\abs{Z}>\delta\sqrt{2m+1})]\le
\E[\abs{\med(\eps)}^{3r}]^{2/3}\PP(\abs{Z}>\delta\sqrt{2m+1})^{1/3},
\]
which by Gaussian tail estimates is of order
$\exp(-\delta^2(2m+1)/6)$ and thus for $m\to\infty$ asymptotically
negligible. This gives the upper moment bound for $\med(\eps)$, the
lower bound follows symmetrically. The $r$-th moment is bounded by even simpler arguments.

The second assertion follows via quantile coupling from
\begin{align*}
&\PP\big(\sqrt{4(2m+1)}f_\eps(0)\abs{\med(\eps)}>\tau_m\big)\\
&\le
\PP\big(\abs{Z}+\tfrac{C}{\sqrt{2m+1}}(1+Z^2)>\tau_m\big)+\PP\big(\abs{Z}>\delta\sqrt{2m+1}\big)\\
&\le
\PP(2\abs{Z}>\tau_m)+\PP(\abs{Z}>\delta\sqrt{2m+1})\\
&\le 2\exp(-\tau_m^2/8).
\end{align*}
\mbox{}\hfill\text{\qed}\\

\bibliographystyle{dcu}
\thebibliography{99}
\harvarditem{Arcones}{2002}{Arcones} {\sc Arcones, M. A.} (2002). Moderate Deviations for $M$-estimators.
{\sl Test} {\bf 11}(2), 465-500.

\harvarditem{Arias-Castro and Donoho}{2006}{AriasDonoho} {\sc
Arias-Castro, E.} and {\sc D. Donoho} (2006). Does median filtering
truly preserve edges better than linear filtering?, {\sl Preprint}
in Math arXive {\tt math/0612422v1}.

\harvarditem{Brown {\it et al.}}{2008}{Brownetal} {\sc Brown, L. D., T.
Cai} and {\sc H. Zhou} (2008). Robust nonparametric estimation via
wavelet median regression. {\sl Annals of Statistics} {\bf 36}(5), 2055--2084.

\harvarditem{Chu and Hotelling}{1955}{ChuHot} {\sc Chu, J. T.} and
{\sc H. Hotelling} (1955). The moments of the sample median. {\sl
Annals of Math. Statistics} {\bf 26}(4), 593--606.

\harvarditem{Hall and Jones}{1990}{HallJones} {\sc Hall, P.} and
{\sc M. C. Jones} (1990). Adaptive $M$-estimation in nonparametric
regression. {\sl Annals of Statistics} {\bf 18}(4), 1712--1728.

\harvarditem{Huber}{1964}{Huber} {\sc Huber, P. J.} (1964).
Robust estimation of a location parameter. {\sl Annals of Mathematical Statistics}
{\bf 35}(1), 73--101.

\harvarditem{Katkovnik and Spokoiny}{2008}{KatSpok} {\sc Katkovnik,
V.} and {\sc V. Spokoiny} (2008). Spatially adaptive estimation via
fitted local likelihood techniques. {\sl IEEE Transactions on Signal Processing} {\bf 56}(3), 873--886.

\harvarditem{Koenker}{2005}{Koenker} {\sc Koenker, R.}  (2005). {\sl Quantile Regression}.
Econometric Society Monographs 38, Cambridge University Press.


\harvarditem{Lepski}{1990}{Lepski} {\sc Lepskii, O. V.} (1990).
A problem of adaptive estimation in Gaussian white noise. {\sl Theory Probab. Appl.} {\bf 35}(3), 454--466. Translated from {\sl Teor. Veroyatnost. i Primenen.}
{\bf 35}(3) (1990), 459--470.

\harvarditem{Lepski {\it et al.}}{1997}{Lepskietal} {\sc Lepski, O., E.
Mammen} and {\sc V. Spokoiny} (1997). Optimal spatial adaptation to
inhomogeneous smoothness: an approach based on kernel estimates with
variable bandwidth selectors. {\sl Annals of Statistics} {\bf
25}(3), 929--947.

\harvarditem{Massart}{2007}{Massart} {\sc Massart, P.} (2005). Concentration inequalities and model selection.  Ecole d'Et\'e de Probabilit\'es de Saint-Flour XXXIII -- 2003, {\sl Lecture Notes in Mathematics 1896}, Springer, Berlin.


\harvarditem{Polzehl and Spokoiny}{2003}{PolSpok03} {\sc Polzehl, J.} and {\sc V. Spokoiny}
(2003). Image denoising: pointwise adaptive approach. {\sl Annals of Statistics} {\bf 31}, 30--57.

\harvarditem{Portnoy}{1997}{Portnoy} {\sc Portnoy, S.} (1997). Local asymptotics for quantile smoothing splines. {\sl Annals of Statistics} {\bf 25}(1), 414--434.

\harvarditem{Rozenholc {\it et al.}}{2009}{RRC} {\sc Rozenholc, Y., M. Rei\ss, D. Balvay} and {\sc C.-A. Cuenod}(2009). Growing time-homogeneous neighbourhoods for denoising and clustering dynamic contrast enhanced-CT sequences, {\sl Preprint Universit\'e Paris V}.

\harvarditem{Spokoiny and Vial}{2009}{SpokVial} {\sc Spokoiny, V.}
and {\sc C. Vial} (2009). Parameter tuning in pointwise adaptation using a propagation approach, {\sl Annals of Statistics}, to appear.

\harvarditem{Truong}{1989}{Truong} {\sc Truong, Y.K.} (1989).
Asymptotic Properties of Kernel Estimators Based on Local Medians,
{\sl Annals of Statistics} {\bf 17}(2), 606--617.

\harvarditem{Tsybakov}{2004}{Tsybakov}  {\sc Tsybakov, A. B.} (2004). Optimal aggregation of classifiers in statistical learning. {\sl Annals of Statistics} {\bf 32}(1) 135--166.

\harvarditem{van de Geer}{2003}{vandeGeer} {\sc van de Geer, S.} (2003). Adaptive quantile regression, in  {\sl Recent Trends in Nonparametric Statistics} (Eds. M.G. Akritas and D.N. Politis), Elsevier Science, 235--250.

\harvarditem{van der Vaart}{1998}{vanderVaart} {\sc van der Vaart, A.} (1998).
{\sl Asymptotic Statistics}, Cambridge University Press.

\end{document}